\documentclass[12pt]{article}
\usepackage{amsmath, amssymb, amsthm, graphicx, tikz}
\usepackage[margin=1in]{geometry}
\usepackage{enumitem}
\usepackage{forest}
\usepackage{tikz}
\usetikzlibrary{trees}
\usepackage{setspace}
\usepackage{xcolor}
\usepackage{tcolorbox}
\usepackage{float}
\usetikzlibrary{backgrounds}   
\usetikzlibrary{positioning}    
\usetikzlibrary{fit}            
\usepackage[utf8]{inputenc}
\usepackage{graphicx}   
\usepackage{caption}    
\usepackage{subcaption} 
 
\newtheorem{observation}{Observation}
\newtheorem{theorem}{Theorem}
\newtheorem{definition}{Definition}
\newtheorem{lemma}{Lemma}
\newtheorem*{conjecture}{Conjecture}

\begin{document}

\title{On the analogue of Esperet’s conjecture: Characterizing hereditary classes}
%\date{}
\author{
N. Rahimi$^1$\thanks{ORCID: 0009-0004-9571-669X}\ , D.A. Mojdeh$^2$\thanks{Corresponding author, ORCID: 0000-0001-9373-3390}\\
$^{1,2}$Department of Mathematics, Faculty of Mathematical Sciences,\\ University of Mazandaran, Babolsar, Iran\\
 $^{1}${\tt narjesrahimi1365@gmail.com}\\
 $^{2}${\tt damojdeh@umz.ac.ir}  
}
\date{}
\maketitle

\begin{abstract}
In the paper [J. Graph Theory (2023) 102:458-471, the Esperet’s conjecture has been posed: Every \(\chi\)-bounded hereditary class is poly-\(\chi\)-bounded]. 
This conjecture was first posed in [Habilitation Thesis, Université Grenoble Alpes, 24, 2017]. This is adapted from the Gyárfás–Sumner's conjecture which has been asserted in [The Theory and Applications of Graphs, (G. Chartrand, ed.), John Wiley \& Sons, New York, 1981, pp. 557-576].

Although the Esperet’s conjecture is false in general,  but in this study, we consider an analogue of Esperet’s conjecture as follows: Let \(\mathcal{C}\) be a hereditary class of graphs, and \(d \geq 1\). Suppose that there is a function \(f\) such that $\chi(G) \leq f(\tau_d(G))$ for each \(G \in \mathcal{C}\). Can we always choose \(f\) to be a polynomial? We investigate this conjecture by focusing on specific classes of graphs. This work identifies hereditary graph classes that do not contain specific induced subdivisions of claws and confirms that they adhere to the stated conjecture.
\end{abstract}

{\bf Keywords}: Coloring, Esperet’s conjecture, chromatic number, poly-\(\chi\)-bounded.

{\bf 2020 Mathematical Subject classification}: 05C15
\section{Introduction}
The chromatic number is a central concept in graph theory with applications in both mathematics and computer science. A key question is whether large chromatic number forces the presence of specific local substructures. A $k$-coloring of a graph is function
from its vertices to $\{1, \ldots, k\}$
 so that adjacent vertices have different colors. The notations $\chi(G)$ and $\omega(G)$ denote the chromatic number of $G$ and the size of the maximum clique of $G$, respectively. The chromatic number is the smallest integer $k$ such that $G$ has a $k$-coloring.
Given that the \emph{clique number} of a graph is the size of its largest complete subgraph, it is clear that for every graph $G$, we have $\chi(G) \geq \omega(G)$, since all vertices in a clique must receive distinct colors.
But when does the chromatic number exceed the clique number?
Does a high chromatic number necessarily imply the existence of a large clique? The answer is no. In the 1940s, Tutte \cite{Descartes,Scott} showed that there exist triangle-free graphs with arbitrarily large chromatic number. Later, Mycielski \cite{Myciel} provided explicit constructions of such graphs.
This raises another question, can graphs with high chromatic number resemble trees? Erdős \cite{ERDO} answered this question in the 1950s by showing the existence of graphs with both large chromatic number and arbitrarily large girth.\\

A graph class \( C \) is called \emph{hereditary} if it is closed under taking induced subgraphs. It is said to be \(\chi\)-bounded if there exists a function \( f \) such that \(\chi(G) \leq f(\omega(G))\) for every graph \( G \in C \). If such a function \( f \) can be chosen to be a polynomial, then \( C \) is called \emph{poly-\(\chi\)-bounded} \cite{GYAR2, Scott-sem1}. Although many classes are known to be \(\chi\)-bounded, their proofs typically produce bounding functions that grow quickly. A graph is $H$-free if it contains no induced copy of $H$.

 \begin{conjecture}\emph{(Gyárfás-Sumner \cite{GYAR COV,Sumner})}\label{Gyárfás-Sumner}
 For every forest $H$, and every positive integer $k$, every $H$-free graph that does not contain a clique on $k$ vertices has bounded chromatic number.
 \end{conjecture}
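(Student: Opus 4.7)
The plan is to attack the conjecture in two stages: first reduce the statement from forests to trees, then induct on some structural parameter (most naturally the radius) of the tree. For the first reduction, if $H = T_1 \sqcup \cdots \sqcup T_m$ is a disjoint union of trees, a standard Ramsey-type pruning argument shows that it suffices to $\chi$-bound $T_i$-free graphs with bounded clique number for each $i$ separately: repeatedly extracting disjoint high-$\chi$ connected pieces of an $H$-free graph forces one of the components $T_i$ to appear induced, unless each piece was already $T_i$-free, bringing us back to the tree case. So I would assume henceforth that $H$ is a tree.

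For the tree case I would induct on the radius $r(H)$. The base case $r(H)=1$ is the star $K_{1,n}$, where the hypothesis $\omega(G)\le k$ together with $K_{1,n}$-freeness forces bounded maximum degree via Ramsey, hence bounded chromatic number. The path case $H = P_n$ is Gyárfás's classical theorem, proved by the greedy ``longest induced path'' argument applied to a BFS tree in a critical subgraph. For the inductive step, given a tree $T$ of radius $r+1$ with pruned subtree $T' = T - L$ of radius $\le r$ (where $L$ denotes the leaves at maximum depth), I would take a $T$-free graph $G$ with $\omega(G)\le k$ and $\chi(G)$ very large, root a BFS tree at a vertex of a $\chi$-critical subgraph, and try to extract a level (or union of two consecutive levels) containing a subgraph of chromatic number still large enough to apply the inductive hypothesis and produce an induced $T'$. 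The final step would be to attach the missing leaves from an appropriate adjacent level, using that the branch vertices of $T'$ must have many neighbors in a specific BFS level of $G$.

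The main obstacle, and precisely the reason the conjecture has remained open since 1981, lies in this last step: extending an induced $T'$ to an induced $T$ by simultaneously attaching leaves to several branch vertices requires controlling the neighborhoods of many vertices at once, and high chromatic number alone only yields a single branching event at a time. Even the case $r(H)=2$ (spiders and subdivided stars) required the substantial ``rooted trees with bounded radius'' machinery of Scott, Chudnovsky, and Seymour, and for $r(H)\ge 3$ no general technique is known. I would therefore expect that a full proof is out of reach with current tools, and that progress must come either through identifying restricted classes (such as the induced subdivided-claw-free classes studied in this paper) where the attachment step can be forced locally, or through a genuinely new global invariant that replaces chromatic number as the driver of the induction.
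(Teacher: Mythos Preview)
The statement you were asked to prove is not a theorem in the paper but the Gy\'arf\'as--Sumner \emph{Conjecture}, stated there explicitly as an open problem; the paper provides no proof and remarks that it ``is known only for a few special kinds of forest.'' So there is no paper proof to compare against.

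Your proposal is not a proof but a sketch of the standard partial-progress framework (reduction from forests to trees, induction on radius, Gy\'arf\'as's path argument, the Kierstead--Penrice/Scott style BFS-level attack), together with an honest admission in the final paragraph that the inductive step fails in general and that the conjecture remains open. That diagnosis is correct: the simultaneous leaf-attachment obstacle you describe is precisely why the conjecture is unresolved for trees of radius at least three. Since the paper itself does not claim a proof, your write-up is appropriate as commentary but should not be presented as a proof attempt; at most it is a survey of what is known and why the problem is hard.
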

 
 This is known only for a few special kinds of forest \cite{Scott-sem1}. During the 1980s, Erdős and Hajnal proposed their well-known conjecture (see \cite{EH1,EH2}). 
  
  \begin{conjecture}\emph{(Erdős-Hajnal \cite{EH1,EH2})}\label{Erdős-Hajnal}
 For every graph $H$, there is a constant $c=c(H)>0$ such that the following
holds: every $H$-free graph with $n$ vertices has a complete subgraph or stable set with at
least $n^c$ vertices.
 \end{conjecture}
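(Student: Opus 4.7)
The plan is to proceed by strong induction on $|V(H)|$. For $|V(H)| \le 2$ the claim is immediate from Ramsey's theorem with $c = 1/2$. Suppose the conjecture holds for every graph on fewer than $k$ vertices, and fix a graph $H$ on $k$ vertices together with an $H$-free graph $G$ on $n$ vertices; the task is to produce a clique or stable set of size at least $n^{c(H)}$.

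The first move is to invoke the substitution theorem of Alon, Pach, and Solymosi: if $H$ is obtained from two smaller graphs $H_1, H_2$ by substituting $H_2$ into a vertex of $H_1$, then the Erd\H{o}s--Hajnal property for $H$ follows from the property for $H_1$ and $H_2$ with an exponent computable from $c(H_1)$ and $c(H_2)$. Applying this iteratively reduces the induction step to the case in which $H$ is \emph{prime}, i.e.\ admits no non-trivial modular decomposition. The next goal is then to extract from any $H$-free $G$ either a polynomial-size clique or stable set directly, or a large induced subgraph $G'$ that is $H'$-free for some proper induced $H' \subsetneq H$, after which the inductive hypothesis closes the argument.

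To perform this extraction I would apply a Szemer\'edi regularity partition to $G$ and examine the associated reduced graph of densities. Pairs of density very close to $0$ or $1$ yield large induced subgraphs of extreme density in which a greedy argument finds polynomially many pairwise adjacent or pairwise non-adjacent vertices. Pairs of intermediate density combine to form a reduced graph that must itself avoid $H$; exploiting the primality of $H$ one would hope to show that such a reduced graph contains a homogeneous ``blow-up'' inside which the forbidden configuration degenerates to an $H' \subsetneq H$, so that the inductive hypothesis applied to a randomly chosen large transversal produces the required homogeneous set in $G$.

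The decisive obstacle --- and the reason this is the Erd\H{o}s--Hajnal conjecture --- is the closing step for prime $H$. Regularity-based extraction loses polynomial control, because the partition has tower-type size in $1/\varepsilon$, and the exponent produced collapses under iteration; primality of $H$ also offers no direct combinatorial lever analogous to substitution. A successful proof would apparently require either a polynomial-type regularity lemma designed to produce homogeneous rather than merely $\varepsilon$-regular parts, or an entirely new structural reduction that breaks prime graphs further. At present only isolated cases (such as $P_4$, the bull, $C_5$, and a handful of others) have been settled, which is why the general conjecture remains open and why the present paper confines itself to an analogue restricted to specific hereditary classes rather than attacking the conjecture head-on.
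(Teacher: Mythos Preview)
The statement you are attempting to prove is not proved in the paper at all: it is stated there as the Erd\H{o}s--Hajnal \emph{conjecture}, still open in general, and the paper only quotes it for context before moving on to a different (and much narrower) question about polynomial $\chi$-bounding functions in terms of $\tau_d$. So there is no ``paper's own proof'' to compare against, and any purported proof of the full conjecture would be a major result far beyond the scope of this paper.

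As for the proposal itself, you correctly identify its own fatal gap in the final paragraph, so this is not a proof but a sketch of why the problem is hard. Concretely: the Alon--Pach--Solymosi substitution step is fine and does reduce to prime $H$, but from that point on nothing in the outline actually works. The Szemer\'edi regularity lemma cannot deliver the required polynomial-size homogeneous set, because the number of parts is tower-type in $1/\varepsilon$ and any clique or stable set extracted from a single regular pair or from the reduced graph has size at most a fixed power of $\log n$, not $n^c$. The hoped-for step where primality of $H$ forces the reduced graph to contain a ``homogeneous blow-up'' degenerating to some proper $H' \subsetneq H$ is simply not a theorem; primality gives no such structural lever, and this is exactly the obstruction that has left the conjecture open for decades. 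In short, the reduction to prime $H$ is standard and correct, but the treatment of prime $H$ is a wish rather than an argument.
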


A hereditary class \( \mathcal{C} \) of graphs has the Erdős–Hajnal property if there is \( c > 0 \) such that every \( G \in \mathcal{C} \) has a stable set or complete subgraph with at least \( |G|^c \) vertices. Thus the Erdős–Hajnal Conjecture says that the class of \( H \)-free graphs has the Erdős–Hajnal property\cite{Scott}.

\begin{theorem}\emph{(\cite{Chudnovs1})}\label{C5}
The Erdős–Hajnal Conjecture holds when $H$ is a cycle of length $5$.
\end{theorem}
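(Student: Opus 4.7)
The strategy I would adopt is to reduce Theorem~\ref{C5} to a stronger structural statement, the \emph{polynomial pure pair property}: there exists a constant $c>0$ such that every $C_5$-free graph $G$ on $n$ vertices contains two disjoint vertex subsets $A,B$ of size at least $n^{c}$ each, with the bipartite graph induced by $(A,B)$ being either complete or edgeless. Once this is established, the Erdős--Hajnal property follows by straightforward induction on $|V(G)|$. Applying the inductive hypothesis to $G[A]$ and $G[B]$ (which are themselves $C_5$-free) yields a clique or stable set of size at least $|A|^{c}\geq n^{c^{2}}$ inside each. If $(A,B)$ is complete, a clique in $A$ together with a clique in $B$ produces a larger clique in $G$; if $(A,B)$ is anticomplete, two stable sets similarly combine into a larger stable set. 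This delivers the EH property with an exponent depending only on $c$.

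The plan for the structural lemma begins by fixing a vertex $v\in V(G)$ and analyzing the tripartition $\{v\}\cup N(v)\cup M(v)$, where $M(v)$ is the set of non-neighbors of $v$. The $C_5$-freeness hypothesis forces rigid constraints on the bipartite pattern between $N(v)$ and $M(v)$: any induced path of length three whose endpoints lie in the correct neighborhood pattern of $v$ would close into a forbidden $5$-cycle through $v$. I would then iterate this analysis by passing to large induced subgraphs in which either the edges between the two sides concentrate (producing an approximate complete bipartite chunk) or they are sparse (producing an anticomplete chunk), using probabilistic sampling or double-counting to control losses so that the surviving sides still have polynomial size. A bootstrap between such iterated refinements should allow one to upgrade an approximate pure pair to an exact one.

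The principal obstacle is the self-complementary nature of $C_5$: since $\overline{C_5}\cong C_5$, no asymmetry between edges and non-edges is available, in contrast with most confirmed cases of Erdős--Hajnal where the forbidden graph has a distinguished sparse or dense side. In particular, one cannot imitate the forest-based proofs connected to Conjecture~\ref{Gyárfás-Sumner}, because $C_5$ is not a forest and $\chi$-bounding arguments do not by themselves yield polynomial-sized homogeneous subsets. I therefore expect the difficult step to be the simultaneous control of edges \emph{and} non-edges between $N(v)$ and $M(v)$: proving that one of the two densities must be extremal on a polynomial-sized pair of subsets, without destroying the $C_5$-free structure during the iteration. Overcoming this symmetry barrier via a carefully balanced ``iterated neighborhood plus Ramsey bootstrapping'' argument is, in my view, the technical heart of the whole theorem.
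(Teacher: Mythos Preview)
The paper does not contain a proof of Theorem~\ref{C5}. The statement is quoted verbatim from \cite{Chudnovs1} as background motivation for why polynomially $\chi$-bounded classes are interesting, and no argument of any kind is given for it in this paper. There is therefore nothing in the paper to compare your proposal against.

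As for your proposal on its own terms: what you have written is not a proof but a plan, and you are candid about this. You correctly identify that the self-complementarity of $C_5$ blocks the standard asymmetric approaches, and your suggestion of extracting a polynomial pure pair via iterated neighborhood analysis around a vertex $v$ is in the spirit of the actual argument in \cite{Chudnovs1}. However, you explicitly leave the decisive step unresolved: you say that ``overcoming this symmetry barrier via a carefully balanced `iterated neighborhood plus Ramsey bootstrapping' argument is \dots\ the technical heart of the whole theorem,'' without supplying that argument. The actual proof in \cite{Chudnovs1} requires substantially more machinery than a straightforward density dichotomy on $(N(v),M(v))$---in particular it goes through an analysis of ordered graphs and a delicate amplification step---so your sketch, while pointed in a reasonable direction, does not yet constitute a proof.
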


Polynomially $\chi$-bounded classes are of particularly interest because of their connection to the Erdős–Hajnal Conjecture: it follows immediately that any polynomially $\chi$-bounded class has the Erdős–Hajnal property \cite{Scott}.

\begin{theorem}\emph{(\cite{Scottsysp2})}\label{P5}
 Every graph with chromatic number at least $k^{\log_2 k}$ contains either a clique on $k$ vertices or an induced path on five vertices.
 \end{theorem}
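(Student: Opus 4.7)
The plan is to prove the contrapositive: every $P_5$-free graph $G$ with $\omega(G)=k$ satisfies $\chi(G)<k^{\log_2 k}$. I would proceed by induction on $k$. The base cases $k\le 2$ reduce to showing that triangle-free $P_5$-free graphs have bounded (in fact, constant) chromatic number, which is a classical consequence of the $P_5$-free structure.

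For the inductive step, assume $G$ is $P_5$-free, connected, and color-critical with $\omega(G)=k\ge 3$. Fix a vertex $v$ and consider the BFS layering $L_0=\{v\}$, $L_1=N(v)$, $L_2$, $L_3$. Since an induced $P_5$ is forbidden and $G$ is connected, no vertex can lie at distance $\ge 4$ from $v$ (any shortest path of length $4$ would itself be an induced $P_5$), so the layering exhausts $V(G)$. The first layer is easy: $\omega(G[L_1])\le k-1$ because $v$ is adjacent to every vertex of $L_1$, and the inductive hypothesis yields $\chi(G[L_1])\le (k-1)^{\log_2(k-1)}$.

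The core of the argument is to bound $\chi(G[L_2\cup L_3])$, which cannot be handled by a naive induction because this subgraph can still have clique number $k$. Here the $P_5$-freeness must be exploited structurally: for any $u\in L_2$ with neighbor $w\in L_1$ and any $w'\in L_1\setminus N(u)$ that has a neighbor $u'\in L_2\cup L_3$ non-adjacent to $u,w$, the path $v{-}w{-}u{-}\cdots$ threatens to produce an induced $P_5$, and ruling it out forces a rigid covering structure on the edges between $L_1$ and $L_2\cup L_3$. The plan is to use this to prove a ``halving'' statement of Scott--Seymour type: in any $P_5$-free graph $G$ of clique number $k$, either a color class of the above coloring of $L_1$ dominates a large fraction of $L_2\cup L_3$, or one can find a vertex whose non-neighborhood inherits at least half of $\chi(G)$ while having clique number at most $k-1$. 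Iterating the halving $\log_2 k$ times bottoms out in an induction on $k$, and multiplying the resulting factors produces the target bound $k^{\log_2 k}$.

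The main obstacle is precisely this halving step: extracting from $P_5$-freeness a recursion of the form $\chi(G)\le \tfrac12\chi(G)+(\text{inductive term in }k-1)$, or equivalently, always locating a vertex whose non-neighborhood either has chromatic number $\le \tfrac12\chi(G)$ or has strictly smaller clique number. Once this structural lemma is in hand, the calibration $(k-1)^{\log_2(k-1)}\cdot k\le k^{\log_2 k}$ follows from elementary estimates on $\log_2$, and the induction closes.
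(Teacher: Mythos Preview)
The paper does not prove this theorem; it is quoted from \cite{Scottsysp2} as background, with no argument supplied. There is therefore no proof in the paper to compare against.

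As for your proposal on its own merits: it is a plan, not a proof, and you yourself name the gap. The entire weight of the bound rests on the ``halving'' lemma you describe---that in a $P_5$-free graph with $\omega=k$ one can always pass to a subgraph with chromatic number at least $\tfrac12\chi(G)$ and clique number at most $k-1$---and you do not prove it; you call it ``the main obstacle'' and move on. The BFS layering and the diameter-$3$ observation are routine first moves in Gy\'arf\'as-path arguments, but they do not by themselves force the recursion you want on $L_2\cup L_3$, which can have full clique number $k$. The actual argument in Scott--Seymour--Spirkl is considerably more intricate than a single clean halving step: it iterates through a sequence of carefully chosen vertices and tracks how cliques interact with successive non-neighborhoods, and the $\log_2 k$ exponent emerges from bookkeeping that your sketch does not provide. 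Until the halving lemma (or an equivalent structural statement) is actually established, the induction does not close and the proposal does not constitute a proof.
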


Although the bound is only slightly superpolynomial, it is still insufficient to prove the Erdős--Hajnal Conjecture for $P_5$.

\begin{conjecture}\emph{(Esperet \cite{Esperet2017})}
 Every \(\chi\)-bounded hereditary class is poly-\(\chi\)-bounded.
\end{conjecture}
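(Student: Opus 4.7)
The only natural route to a proof is by reduction to single-forbidden-induced-subgraph classes. Every hereditary class $\mathcal{C}$ equals $\mathrm{Forb}(\mathcal{F})$ for some antichain $\mathcal{F}$ of graphs under the induced subgraph order. If $\mathcal{C}$ is $\chi$-bounded, one would first argue that each $H \in \mathcal{F}$ lies in a $\chi$-bounded single-forbidden class; by Conjecture~\ref{Gyárfás-Sumner}, when $\mathcal{F}$ is finite this forces each such $H$ to be a forest. Proving poly-$\chi$-boundedness for each $\mathrm{Forb}(H)$ with $H$ a forest would then reduce the general statement, provided one also shows that finite intersections of poly-$\chi$-bounded classes are poly-$\chi$-bounded, which, once a uniform polynomial degree is fixed, follows by elementary manipulation.

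The inductive heart would then be: for every forest $F$, prove poly-$\chi$-boundedness of $\mathrm{Forb}(F)$ by induction on $|V(F)|$. I would pick a leaf $v$ of $F$ with neighbor $u$, take $G \in \mathrm{Forb}(F)$ of large $\chi$, choose a vertex $x$ of large degree, and analyze the BFS layers around $x$: within each layer, $G$ either still forbids $F-v$ (so induction yields a polynomial bound there) or one can locate a copy of $v$, contradicting $F$-freeness. The delicate point is to arrange the recursion so that the exponent of $\omega$ in the bounding polynomial does not blow up with $|V(F)|$; the naive neighborhood argument multiplies exponents across levels and so inflates the bound to $\omega^{\Omega(\log \omega)}$.

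The main obstacle is already visible in Theorem~\ref{P5}: for the single forbidden graph $P_5$ the best known $\chi$-bounding function is $\omega^{\log_2 \omega}$, which is merely super-polynomial. Sharpening this to a polynomial for $P_5$-free graphs is itself a well-known open problem, and any proof of Esperet's conjecture would, in particular, have to contain such a sharpening. I should also flag the honest caveat recorded in the abstract of the present paper: the unrestricted conjecture is now known to \emph{fail}, so no proof of it as literally written can succeed — any attempted reduction along the lines above must break down at exactly the point where an infinite antichain $\mathcal{F}$ is permitted. The only productive continuation, and the one pursued in this paper, is therefore to weaken $\omega$ to a different hereditary parameter such as $\tau_d$, under which the analogous polynomial bound becomes approachable.
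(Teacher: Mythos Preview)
The paper does not prove this statement; it is presented as a conjecture and is immediately followed by the remark that it ``was recently disproved \cite{Briański}''. There is therefore no proof in the paper against which to compare your proposal.

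Your write-up is not a proof either, and you correctly concede as much in the final paragraph: the conjecture is false, so no argument for it can succeed. What precedes that concession is a sketch of a proof \emph{strategy} (reduce to $\mathrm{Forb}(F)$ for forests $F$, induct on $|V(F)|$ via BFS layers) together with an honest identification of where it stalls (already at $P_5$ the best known bound is $\omega^{\log_2 \omega}$, cf.\ Theorem~\ref{P5}). That diagnosis is accurate and in line with the paper's own framing. One technical point in your sketch is off, however: it is not true that ``finite intersections of poly-$\chi$-bounded classes are poly-$\chi$-bounded'' by elementary manipulation once a uniform degree is fixed --- forbidding several graphs simultaneously is a \emph{weaker} hypothesis than forbidding any one of them, so knowing a polynomial bound for each $\mathrm{Forb}(H_i)$ separately tells you nothing about $\bigcap_i \mathrm{Forb}(H_i)$ without further work. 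This gap is moot given the conjecture's falsity, but the reduction you describe would not go through even if the individual $\mathrm{Forb}(F)$ cases were settled.
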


However, this was recently disproved \cite{Briański}, the main open question now is to identify which hereditary classes are poly-$\chi$-bounded.

For a graph \( G \) and an integer \( d \geq 1 \), let \(\tau_d(G)\) denote the largest \( t \) such that \( G \) has a subgraph (not necessarily induced) isomorphic to the complete \( d \)-partite graph with each part of cardinality \( t \).

While Esperet’s Conjecture remains open, it may be preferable to replace a clique  with a complete $d$-partite graph. This question has been posed in \cite{Scott-sem2}.

(An analogue of Esperet’s (false) conjecture for \(\tau_d(G)\)): Let \(\mathcal{C}\) be a hereditary class of graphs, and \(d \geq 1\). Suppose that there is a function \(f\) such that
\[
\chi(G) \leq f(\tau_d(G))
\]
for each \(G \in \mathcal{C}\). Can we always choose \(f\) to be a polynomial?

The results obtained so far in this field.

\begin{theorem}\emph{(\cite{Scott-sem2} 1.6)}\label{theodoe24}
For every tree \( H \) of radius two, and every integer \( d \geq 1 \), there is a polynomial \( f \) such that
\[
\chi(G) \leq f(\tau_d(G))
\]
for every \( H \)-free graph \( G \).
\end{theorem}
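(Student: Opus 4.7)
The plan is to combine the classical BFS levelling argument with an iterative \emph{star-extraction} inside a carefully-chosen neighborhood. Write $H$ as a tree of radius two with center $c$, so that $H - c$ is a star forest: a disjoint union of stars $K_{1, a_1}, \ldots, K_{1, a_k}$ (with the convention that $a_i = 0$ represents an isolated leaf neighbor of $c$). The observation driving the whole proof is that if we can find an induced copy of $H - c$ entirely inside the neighborhood $N(w)$ of some vertex $w$, then $w$ is adjacent to every vertex of that copy, plays the role of $c$, and we obtain an induced copy of $H$ in $G$, contradicting $H$-freeness. Hence the task reduces to locating an induced copy of the star forest $H - c$ inside some neighborhood of large chromatic number.

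First I would perform a BFS from an arbitrary vertex $v_0$, partitioning $V(G)$ into levels $L_i$ according to distance from $v_0$. Since no edges join $L_i$ and $L_{i'}$ when $|i - i'| \geq 2$, coloring all even-indexed levels with one palette and all odd-indexed levels with a disjoint palette shows $\chi(G) \leq 2\max_i \chi(G[L_i])$, and so some level $L_j$ satisfies $\chi(G[L_j]) \geq \chi(G)/2$. Every vertex of $L_j$ (for $j \geq 1$) has at least one neighbor in $L_{j-1}$; by an iterated pigeonhole / Ramsey-style argument exploiting that $\tau_d(G[L_{j-1}]) \leq \tau_d(G)$, one then locates a single vertex $w \in L_{j-1}$ such that $\chi(G[N(w) \cap L_j])$ is at least $\chi(G)$ divided by a polynomial in $\tau_d(G)$.

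Inside $G[N(w) \cap L_j]$ I would find an induced copy of $H - c$ greedily: extract an induced $K_{1, a_1}$, delete its closed neighborhood, extract $K_{1, a_2}$ from what remains, and so on for all $k$ stars. The main obstacle is controlling this deletion step with only polynomial loss in chromatic number: a naive Ramsey argument for producing induced stars incurs exponential dependence on the $a_i$ and on $\omega(G)$, which would ruin the polynomial character of $f$. The remedy is to replace the generic Ramsey step by a Kővári--Sós--Turán-type estimate adapted to $\tau_d$: in a graph with $\tau_d(G) \leq t$, the number of vertices sharing many common neighbors is polynomially bounded in $t$, so we can locate a vertex whose neighborhood contains a large independent set --- and hence an induced star --- at only polynomial cost in $t$. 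Iterating through the $k$ stars of $H - c$ and tracking the polynomial blow-ups through the BFS and neighborhood-concentration steps yields the desired polynomial $f$, of degree depending on $|V(H)|$ and $d$, with $\chi(G) \leq f(\tau_d(G))$ for every $H$-free graph $G$.
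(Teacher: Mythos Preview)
First note that this paper does not supply its own proof of the theorem: it is quoted from \cite{Scott-sem2}, and only the resulting polynomial (the cascade $f_8, f_7, \ldots, f_1$ displayed just after Theorem~\ref{theom-main1}) is imported, for use in the proof of Theorem~\ref{theom-main2}. So there is no in-paper argument to compare against; your sketch can only be judged on its own merits.

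Your Step~2 contains a genuine gap. You claim that, using only $\tau_d(G)\le t$ together with the BFS layering, one can find $w\in L_{j-1}$ with $\chi(G[N(w)\cap L_j])$ at least $\chi(G)$ divided by a polynomial in $t$. This is false as stated, because $H$-freeness has not yet been invoked. Let $G$ be any graph of girth at least five and chromatic number $k$ (Erd\H{o}s \cite{ERDO}): then $G$ is $C_4$-free, so $\tau_d(G)=1$ for every $d\ge 2$, yet $G$ is triangle-free, so every open neighbourhood is independent and $\chi(N(w))=1$ for all $w$, while $\chi(G)=k$ is unbounded. No polynomial-loss neighbourhood concentration is therefore possible from the $\tau_d$ hypothesis alone. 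Such $G$ do contain every tree as an induced subgraph, so they are not counterexamples to the theorem itself---but they show that your Step~2 cannot be carried out without already exploiting $H$-freeness at that stage. Step~3 inherits the same difficulty: after locating one induced star $S$ you delete $N[S]$, and controlling $\chi(G'\setminus N[S])$ requires bounding $\chi(N(v))$ for each $v\in S$, which is exactly the neighbourhood-concentration statement you have not established. The actual proof in \cite{Scott-sem2} does not separate cleanly into ``concentrate, then extract''; it interleaves $H$-freeness with the chromatic bookkeeping through a multi-stage recursion, which is why the functions $f_1,\ldots,f_8$ quoted here are so elaborate.
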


\begin{theorem}\emph{(\cite{Nguyen} 2.4)}\label{theotu23-1}
For every path $H$ and all $d \ge 1$, there is a polynomial $f$ such that, for all $t \ge 1$,  
if a graph $G$ is $H$-free and does not contain $K_d(t)$ as a subgraph, then $\chi(G) \le f(t)$.
\end{theorem}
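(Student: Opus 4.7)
The plan is to prove the statement by induction on the number of vertices of the path $H = P_k$.

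\emph{Base case.} When $k \leq 5$, the path $P_k$ is a tree of radius at most $2$, so Theorem~\ref{theodoe24} directly supplies a polynomial $f$ satisfying the conclusion.

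\emph{Inductive step.} Fix $k \geq 6$ and assume the result for $P_{k-1}$-free graphs, with bounding polynomial $g$. Let $G$ be $P_k$-free with no $K_d(t)$ subgraph; we may assume $G$ is connected. Pick a root $v_0$ and form the BFS layers $L_0, L_1, \ldots, L_r$ from $v_0$. Because every shortest path is induced, the $P_k$-freeness of $G$ forces $r \leq k-2$, so
\[
\chi(G) \;\leq\; \sum_{i=0}^{r}\chi(G[L_i]) \;\leq\; (k-1)\cdot \max_i \chi(G[L_i]),
\]
and it suffices to bound $\chi(G[L_i])$ polynomially in $t$. For a fixed layer $L_i$ (with $i \geq 1$), assign to each vertex $u$ a parent $\pi(u) \in L_{i-1}$. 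By a pigeonhole/Ramsey-type argument on the fibres $\pi^{-1}(w)$ together with cross-edges in $L_i$, and using that $G$ has no $K_d(t)$ subgraph, one decomposes $L_i$ into polynomially many pieces in $t$, each of which falls under the inductive hypothesis for $P_{k-1}$. The induction then delivers a polynomial-in-$t$ bound on $\chi(G[L_i])$, which combined with the display above completes the step.

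\emph{Main obstacle.} The delicate point is the decomposition inside a single layer. A naive attempt -- choose a common neighbor $w \in L_{i-1}$ and restrict to $N(w) \cap L_i$, hoping it is $P_{k-1}$-free -- fails, because $w$ is joined to every vertex of any internal path and so cannot extend it to an induced $P_k$. The decomposition must therefore exploit several layers $L_{i-1}, L_{i-2}, \ldots$ simultaneously, together with the existence of induced shortest paths descending through the BFS tree. The extraction must moreover be executed so that the number of pieces needed to cover $L_i$ is polynomial in $t$; otherwise the polynomial bound is lost. This quantitative coupling of structural extraction and counting is the crux of the argument.
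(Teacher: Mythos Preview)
Your proposal has a genuine gap. The BFS layering correctly bounds the number of layers by $k-1$, but the heart of the argument---reducing $\chi(G[L_i])$ to the $P_{k-1}$ case---is asserted rather than proved. You write that ``one decomposes $L_i$ into polynomially many pieces in $t$, each of which falls under the inductive hypothesis for $P_{k-1}$,'' and then immediately identify this as ``the crux of the argument'' without carrying it out. The obstacle you yourself name is real: a common parent $w$ is adjacent to every vertex of a path inside its fibre and so cannot extend it, and there is no evident mechanism by which the absence of $K_d(t)$ turns a single layer into a polynomial-in-$t$ union of $P_{k-1}$-free pieces. This is not a routine pigeonhole or Ramsey step, and without it the induction on $k$ does not close.

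The approach in \cite{Nguyen} (whose key tool the present paper reproduces as Lemma~\ref{lem-mgun}) is entirely different and, for paths, much simpler: one inducts on $d$, not on $k$. The base $d=1$ is trivial. For the step, apply Lemma~\ref{lem-mgun} with $p=k$. Since $G$ is $P_k$-free it has no induced path on $k$ vertices, hence no $(k,t)$-balloon at all, so the balloon hypothesis of the lemma is vacuously satisfied for any $q\ge 1$. For the biclique hypothesis, if $(X,Y)$ were a $t$-biclique of value $g(t)+1$ (with $g$ the polynomial for $d-1$), then by induction $G[Y]$ contains $K_{d-1}(t)$, which together with $X$ yields $K_d(t)$ in $G$, a contradiction. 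Lemma~\ref{lem-mgun} then outputs a bound polynomial in $t$. Thus for paths the balloon method is at its cleanest: the balloon claim costs nothing, and all the work is already packaged in the lemma.
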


\begin{theorem}\emph{(\cite{Nguyen} 3.2)}\label{theotu23-2}
For all $d \ge 1$ and every broom $H$ 
(A broom is obtained from a path with one end v by adding leaves adjacent to v), there is a polynomial $f$ such that, for all $t \ge 1$,  
if a graph $G$ is $H$-free and does not contain $K_d(t)$ as a subgraph, then $\chi(G) \le f(t)$.
\end{theorem}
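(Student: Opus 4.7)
We argue by induction on the handle length $\ell$ of the broom $H = B_{\ell,k}$, whose vertices we label as the path $v_0 v_1 \ldots v_\ell$ together with $k$ additional leaves attached to $v_0$. For the base case $\ell \leq 3$, one checks that $B_{\ell,k}$ is a tree of radius at most $2$: take $v_0$ as centre when $\ell \leq 2$, and $v_1$ when $\ell = 3$. Hence Theorem~\ref{theodoe24} directly supplies the required polynomial $f$. Assume therefore $\ell \geq 4$ and that the result holds for every broom of strictly smaller handle length, with polynomial bounding functions $f_{\ell-1}, f_{\ell-2}, \ldots$.

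Let $G$ be $H$-free with $\tau_d(G) \leq t$ and set $c = \chi(G)$. Fix a vertex $r$ in a connected component of $G$ of chromatic number $c$ and run a breadth-first search from $r$, producing levels $L_0 = \{r\}, L_1, L_2, \ldots$. Since every edge of $G$ lies within a single level or joins two consecutive levels, assigning disjoint palettes to even- and odd-indexed levels gives $\chi(G) \leq 2 \max_i \chi(G[L_i])$, so some level $L_{i^*}$ satisfies $\chi(G[L_{i^*}]) \geq c/2$. The pivotal claim is that $G[L_{i^*}]$ is $B_{\ell-1,k}$-free. Granting this claim, the induction hypothesis applied to the induced subgraph $G[L_{i^*}]$ (which inherits $\tau_d \leq t$) yields $\chi(G[L_{i^*}]) \leq f_{\ell-1}(t)$, whence $\chi(G) \leq 2 f_{\ell-1}(t)$, a polynomial in $t$, as desired.

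To prove the pivotal claim, suppose for contradiction that $G[L_{i^*}]$ contains an induced $B_{\ell-1,k}$ with path $w_0 w_1 \ldots w_{\ell-1}$ and bristles $z_1, \ldots, z_k$ at $w_0$, all lying in $L_{i^*}$. Because $w_{\ell-1} \in L_{i^*}$ and $i^* \geq 1$, the vertex $w_{\ell-1}$ possesses at least one BFS-parent $x \in L_{i^*-1}$. If $x$ is non-adjacent to each of $w_0, \ldots, w_{\ell-2}$ and to each bristle $z_j$, then $x, w_{\ell-1}, w_{\ell-2}, \ldots, w_0$ forms an induced path of length $\ell$ in $G$, and adjoining the bristles $z_1, \ldots, z_k$ at its endpoint $w_0$ yields an induced copy of $H = B_{\ell,k}$, contradicting $H$-freeness.

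The principal obstacle is guaranteeing such a ``clean'' parent $x$: $w_{\ell-1}$ may have only a few BFS-parents, and any one of them may be adjacent to an interior vertex of the path or to a bristle, creating chords that destroy the induced structure. I would overcome this in two stages. First, a \emph{pruning} stage: remove from $L_{i^*}$ those vertices whose parent sets in $L_{i^*-1}$ have too many adjacencies back into $L_{i^*}$, bounding the number deleted by a density argument that exploits $\tau_d(G) \leq t$, so that $\chi(G[L_{i^*}])$ drops by at most a polynomial factor in $t$. Second, within the pruned level, an averaging/Ramsey-type argument selects an induced copy of $B_{\ell-1,k}$ whose endpoint $w_{\ell-1}$ admits a parent simultaneously non-adjacent to every other vertex of that copy. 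This second stage is where I expect most of the technical work to sit; it may well force strengthening the inductive hypothesis so that both the handle length and the bristle count are tracked, allowing the bristles ``lost'' during pruning to be absorbed cleanly into the induction.
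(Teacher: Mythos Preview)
This theorem is quoted from \cite{Nguyen} and is not proved in the present paper. However, the machinery reproduced here---Lemma~\ref{lem-mgun} on $(p,t)$-balloons and $t$-bicliques---together with the pattern of the proofs of Theorems~\ref{theom-main1}--\ref{theom-main4} indicates the intended method: one inducts on $d$, not on the handle length $\ell$. A $t$-biclique of large value would, by the inductive hypothesis for $d-1$, force $K_{d-1}(t)$ in its $Y$-side and hence $K_d(t)$ in $G$. A $(p,t)$-balloon of large value supplies, \emph{by definition}, an induced path $v_1\text{-}\cdots\text{-}v_p$ that is already clean with respect to the $t$-connected set $Y$ (no $v_i$ with $i<p$ has a neighbour in $Y\setminus\{v_p\}$); one then locates the bristles inside $Y$ and attaches them at $v_p$ to obtain the forbidden broom. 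Lemma~\ref{lem-mgun} converts these two exclusions into a polynomial bound on $\chi(G)$.

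Your BFS-layering approach is a genuinely different strategy, but the gap you yourself flag is fatal as written. The pivotal claim---that a high-chromatic level $L_{i^*}$ must be $B_{\ell-1,k}$-free---is simply false: a BFS parent $x$ of $w_{\ell-1}$ may be adjacent to $w_0$ or to some bristle $z_j$, and then no induced $B_{\ell,k}$ can be extracted from that copy. Your proposed repair (prune vertices whose parent sets over-reach into $L_{i^*}$, then select a clean copy by a Ramsey-type argument) is not carried out, and neither step is controlled by $\tau_d(G)\le t$ in any evident way; you even concede that the induction hypothesis may need to be strengthened in an unspecified manner. The balloon framework exists precisely to sidestep this difficulty: it packages the existence of a clean attaching path into the hypothesis of Lemma~\ref{lem-mgun}, rather than attempting to manufacture one after the fact inside a BFS level.
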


A \emph{rooted tree} $(T, r)$ consists of a tree $T$ and a vertex $r$ of $T$ called the root. The \emph{height} of $(T, r)$ is the length (number of edges) of the longest path in $T$ with one endpoint $r$. The \emph{spread} of $T$ is the maximum over all vertices $u \in V(T)$ of the number of children of $u$.

For integers $s,p\geq 1$, let $H(s,p)$ be the tree with $1 + s + s^2 + \dots + s^p$ vertices, where some vertex has degree $s$ (root) and all its neighbors have degree $s+1$ and the same process should continue for the vertices at each level until reaching level $p$, so $H(s,p)$ has height $\eta = p$ (at the first level, the height is zero , $p=0$) and spread $\zeta = s$.

\vspace{2\baselineskip}
If \( X \subseteq V(G) \), we define \( \chi(X) = \chi(G[X]) \). Let \( X, Y \) be disjoint subsets of \( V(G) \), where \( |X| = t \) and every vertex in \( Y \) is adjacent to every vertex in \( X \). We call \( (X, Y) \) a \textbf{\( t \)-biclique}, and its value is \( \chi(Y) \).

For integers \( p, t \geq 1 \), a \textbf{\( (p, t) \)-balloon} \( (P, Y) \) in \( G \) consists of an induced path \( P \) in \( G \) with vertices \( v_1\text{-}v_2\text{-}\cdots\text{-}v_p \) (in order), and a subset \( Y \subseteq V(G) \), satisfying the following properties:
\begin{itemize}[leftmargin=*, label=$\bullet$]
    \item \( v_1, \ldots, v_{p-1} \notin Y \), \( v_p \in Y \). None of \( v_1, \ldots, v_{p-2} \) have a neighbor in \( Y \), and if \( p \geq 2 \), then \( v_p \) is the unique neighbor of \( v_{p-1} \) in \( Y \).
    \item \( G[Y] \) is \( t \)-connected.
\end{itemize}
The value of the \( (p, t) \)-balloon \( (P, Y) \) is \( \chi(Z) \), where \( Z \) is the set of vertices in \( Y \) nonadjacent to \( v_p \) (hence \( v_p \in Z \)).
\vspace{2mm}

Nguyen et al.  proved the following lemma.

\begin{lemma} \emph{(\cite{Nguyen} Lemma 2.3)}\label{lem-mgun}
For every graph $G$, and all integers $p, q, s, t \geq 1$, if $G$ contains no $(p,t)$-balloon of value at least $q$, and $G$ contains no $t$-biclique of value $s$, then:
\[
\chi(G) \leq (\sum_{i=0}^{p-1} t^i)(s + t(2t + 9)) + t^p q
\]
\end{lemma}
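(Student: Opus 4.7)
The natural approach is induction on $p$, guided by the recursion $f_{p+1} = t \cdot f_p + (s + t(2t+9))$ satisfied by the claimed bound $f_p := \bigl(\sum_{i=0}^{p-1} t^i\bigr)(s + t(2t+9)) + t^p q$. Equivalently, $f_{p+1}$ with parameter $q$ equals $f_p$ with parameter $q' := s + t(2t+9) + tq$. So the induction step reduces to a reduction statement: if $G$ contains no $(p+1,t)$-balloon of value $\geq q$ and no $t$-biclique of value $\geq s$, then $G$ contains no $(p,t)$-balloon of value $\geq q'$. The base case $p=1$, where a $(1,t)$-balloon is simply a $t$-connected set $Y$ with a distinguished vertex $v_1\in Y$ valued by $\chi(Y\setminus N[v_1])$, should be handled by a direct argument: working one connected component at a time, the combination of no high-value $(1,t)$-balloon and no high-value $t$-biclique lets one colour $G$ greedily along a layering that removes, at each step, either a low-degree vertex or the common neighbourhood of a $t$-subset of a current clique.

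For the inductive step, assume a $(p,t)$-balloon $(P,Y)$ with $P=v_1\text{-}\cdots\text{-}v_p$ and $\chi(Z)\geq q'$, where $Z := Y\setminus N_G[v_p]$. Let $N := N_Y(v_p)$; the $t$-connectivity of $G[Y]$ forces $|N|\geq t$. The aim is a dichotomy: \emph{either} find a $t$-subset $W\subseteq N$ whose common neighbourhood in $Z$ has chromatic number $\geq s$, yielding a $t$-biclique of value $\geq s$ (contradiction), \emph{or} find a single $w\in N$ together with a $t$-connected $T\subseteq \{w\}\cup Z$ with $w\in T$ and $\chi(T\setminus N_G[w])\geq q$, which produces the desired $(p+1,t)$-balloon by extending the path with $w$ as $v_{p+1}$ and using $T$ as the new attachment set. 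The induced-path conditions needed for the new balloon --- namely that $v_1,\dots,v_{p-1}$ have no neighbours in $T$ and that $w$ is the unique neighbour of $v_p$ in $T\cup\{w\}$ --- come for free, since $T\cup\{w\}\subseteq Y$ and the original balloon already forbids such neighbours.

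The main obstacle is producing the $t$-connected $T$ with enough residual chromatic mass after deleting $N_G[w]$. The expected mechanism is to pigeonhole $Z$ by neighbourhood pattern in $N$: if no $t$-subset of $N$ has a rich common neighbourhood, then after removing a bounded number of ``low-quality'' subsets of $Z$ one is left with a large-chromatic $Z^\ast\subseteq Z$ whose vertices all have many neighbours in $\{w\}\cup Z^\ast$ for some well-chosen $w\in N$. A Menger-type or minimum-degree argument then carves a $t$-connected piece out of $\{w\}\cup Z^\ast$. Keeping track of the chromatic losses during these removals --- in particular, losses of order $2t$ from small vertex-cut surgeries used to restore $t$-connectivity, together with a fixed additive slack --- is what produces the somewhat odd constant $2t+9$, and this bookkeeping is the delicate part of the argument and the most likely place for an error.
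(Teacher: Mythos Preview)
The present paper does not prove this lemma; it is quoted from \cite{Nguyen} and used as a black box, so there is no in-paper argument to compare against. Judged on its own, your inductive scaffolding is correct --- the recursion $f_{p+1}(q)=f_p\bigl(s+t(2t+9)+tq\bigr)$ holds, and the reduction ``no $(p{+}1,t)$-balloon of value $\ge q$ and no $t$-biclique of value $\ge s$ $\Rightarrow$ no $(p,t)$-balloon of value $\ge s+t(2t+9)+tq$'' is the right target --- but what you have written is an outline, not a proof. Both the base case and the inductive dichotomy are asserted rather than established, and in each the missing work is substantial.

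Concretely, your mechanism for the dichotomy (``pigeonhole $Z$ by neighbourhood pattern in $N$'') cannot work as stated: $|N|=|N_Y(v_p)|$ is unbounded, so there may be $2^{|N|}$ patterns and no pigeonhole isolates a fixed $t$-subset of $N$ with large common neighbourhood in $Z$. You also never explain how to carve a \emph{$t$-connected} $T\subseteq\{w\}\cup Z$ out of a set that merely has high minimum degree; this is exactly where the specific loss $t(2t+9)$ must be accounted for, and you yourself flag it as the likely failure point without resolving it. (There is also a small slip: by definition $v_p\in Z$, so $T$ must be chosen inside $\{w\}\cup(Z\setminus\{v_p\})$; otherwise $v_{p-1}$ acquires a neighbour in $T$ and the extended balloon conditions fail.) The base case $p=1$ is equally unproved: ``greedy colouring along a layering'' does not indicate how the $t$-connectivity hypothesis on $Y$ is exploited or where the constant $2t+9$ originates. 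In short, the skeleton is right, but the two load-bearing steps are genuine gaps.
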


\begin{theorem}\label{theo-scsys} \emph{(\cite{Scottsysp1} 3.2)}
Let $\eta, t \geq 1$ and $\zeta \geq 2$. For every rooted tree $(H,r)$ with height at most $\eta$ and spread at most $\zeta$, let $c = (\eta + 3)! |H|$. Then $\partial(G) \leq (|H| \zeta t)^c$ for every $H$-free graph $G$ that does not contain $K_{t,t}$ as a subgraph. 
\end{theorem}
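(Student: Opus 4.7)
The plan is to proceed by induction on the height $\eta$ of the rooted tree $(H, r)$, using Lemma~\ref{lem-mgun} as the main engine at each level. For the base case $\eta = 1$, the tree $H$ is a star $K_{1,s}$ with $s \le \zeta$, so any $H$-free graph has maximum degree less than $\zeta$ and thus $\chi(G) \le \zeta$, which is comfortably dominated by $(|H|\zeta t)^c$ since $c \geq 1$.

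For the inductive step, write $H_1, \dots, H_k$ (with $k \le \zeta$) for the subtrees rooted at the children of $r$. Each $H_i$ has height at most $\eta - 1$ and spread at most $\zeta$, and $|H_i| < |H|$, so the induction hypothesis supplies a polynomial bound on $\chi$ for $H_i$-free graphs without $K_{t,t}$ as a subgraph. I would then apply Lemma~\ref{lem-mgun} to reduce the theorem to two \emph{extraction claims}: first, that any $t$-biclique in $G$ of value at least $s$ already contains an induced copy of $H$, and second, that any $(p,t)$-balloon in $G$ of value at least $q$ already contains an induced copy of $H$, where $p$, $q$, $s$ are parameters to be tuned. Since $G$ is $H$-free, both extractions must fail, and Lemma~\ref{lem-mgun} then yields a polynomial bound on $\chi(G)$ that I would estimate directly to fit the closed form $(|H|\zeta t)^{(\eta+3)!|H|}$.

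To establish the biclique extraction, fix a vertex $r' \in X$ of a $t$-biclique $(X, Y)$ with $\chi(Y) \geq s$, and pull out pairwise non-adjacent induced copies $T_1, \dots, T_k$ of $H_1, \dots, H_k$ from the subset of $V(G) \setminus X$ consisting of vertices whose only neighbor among the already-chosen material sits at the designated child position. At step $j$ one restricts to a large-chromatic subgraph non-adjacent to $T_1, \dots, T_{j-1}$ and applies the induction hypothesis on $H_j$-freeness; provided $s$ swallows the compounded polynomial losses from the $\zeta$ iterative restrictions, the extraction succeeds and $r'$ together with the $T_i$'s forms an induced $H$. The balloon extraction is analogous, with the induced path $P$ acting as the spine rooted at $v_1$, the $t$-connectedness of $G[Y]$ enabling the selection of pairwise non-adjacent candidate children inside $Y$, and the restriction to the set $Z$ of vertices in $Y$ non-adjacent to $v_p$ supplying the separation needed so that the embedded copy of $H$ remains induced.

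The main obstacle is the simultaneous calibration of $p$, $q$, and $s$ so that the bound produced by Lemma~\ref{lem-mgun} actually fits inside $(|H|\zeta t)^{(\eta+3)!|H|}$. Each inductive step has to absorb a factor arising from extracting up to $\zeta$ disjoint, pairwise non-adjacent subtrees, plus a further factor from routing along the path in the balloon case; the exponent is thus multiplied by a small factor depending on $\zeta$ and $|H|$ at every level, and the factorial $(\eta+3)!$ is precisely what makes this compounding collapse into a single polynomial closed form. Equally delicate is verifying that the iterative peeling genuinely produces an \emph{induced} copy of $H$ rather than a subdivided or contaminated one; guaranteeing that the non-root vertices of each $T_i$ remain non-adjacent to $r'$ (or to $v_p$ in the balloon case) and to all earlier $T_j$'s is where the combinatorial heart of the argument sits.
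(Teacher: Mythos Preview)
The paper does not prove Theorem~\ref{theo-scsys}; it is quoted verbatim from \cite{Scottsysp1} and used as a black box (in particular to derive Observation~\ref{obs-H(s,p)}). So there is no in-paper proof to compare your proposal to. That said, your proposal has two substantive gaps that are worth naming.

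First, and most basically, the theorem bounds the \emph{degeneracy} $\partial(G)$, not the chromatic number. Your entire engine is Lemma~\ref{lem-mgun}, whose conclusion is a bound on $\chi(G)$. Since only $\chi(G)\le\partial(G)+1$ holds and not the reverse, a $\chi$-bound obtained via balloons and bicliques does not yield the stated degeneracy bound. The argument in \cite{Scottsysp1} is not a colouring argument at all: it is an average-degree/degeneracy induction (one shows that if every subgraph has large minimum degree then one can grow an induced copy of the rooted tree level by level, using the $K_{t,t}$-freeness to control the overlap of neighbourhoods). Lemma~\ref{lem-mgun} belongs to a different paper \cite{Nguyen} and a different toolkit.

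Second, even if one only wanted a $\chi$-bound, your balloon extraction does not handle a general rooted tree of height $\eta$ and spread $\zeta$. A $(p,t)$-balloon supplies an induced path followed by a $t$-connected blob; you propose to use the path as a ``spine rooted at $v_1$'' and then peel off the children's subtrees inside $Y$. But $(H,r)$ may branch at \emph{every} level, not just at the end of a path, so a single induced path cannot serve as the skeleton of $H$. This is exactly why in the present paper the balloon method is applied only to path-like trees ($T_1$, $T_2$, brooms), whereas for trees of radius two and for $H(s,p)$ the authors invoke Theorem~\ref{theodoe24} and Theorem~\ref{theo-scsys} from outside rather than the balloon lemma.
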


\begin{theorem}\emph{(\cite{Randerath})}\label{theo-mgun}
Let \( G \) be a paw-free graph. If \( G \) is \( H \)-free, cross-free, or \( E \)-free, then:
\[
\chi(G) \leq \omega(G) + 1.
\]
\end{theorem}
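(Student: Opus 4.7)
The plan is to exploit the well-known structural dichotomy of Olariu for paw-free graphs: every connected paw-free graph is either triangle-free or complete multipartite. Since the chromatic number and clique number of $G$ are determined by a single component, I would reduce to the connected case at the outset and then split on this dichotomy.

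The complete multipartite case is immediate: if $G$ is complete multipartite with $k$ parts, then $\chi(G) = \omega(G) = k$, so $\chi(G) \leq \omega(G) + 1$ holds with room to spare, and crucially this case uses neither the $H$-free, cross-free, nor $E$-free hypothesis. The substance of the theorem therefore lies in the triangle-free case, where $\omega(G) \leq 2$ and the goal collapses to proving $\chi(G) \leq 3$.

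For the triangle-free case I would argue by induction on $|V(G)|$, assuming $G$ is a vertex-critical counterexample with $\chi(G) \geq 4$, so $\delta(G) \geq 3$ and $G$ is $2$-connected. Fix a vertex $v$ of maximum degree; because $G$ is triangle-free, $N(v)$ is independent. The forbidden subgraph hypothesis then forces severe restrictions on how vertices of $N_2(v) = V(G) \setminus (\{v\} \cup N(v))$ attach to $N(v)$: any vertex $w \in N_2(v)$ with two neighbors in $N(v)$, combined with suitable neighbors elsewhere, will realize an induced copy of $H$, cross, or $E$ (according to the hypothesis). This should reduce each $w \in N_2(v)$ to a very restricted attachment pattern, from which one either produces a valid $3$-coloring by extending a $2$-coloring of $N(v) \cup \{v\}$, or finds a contractible pair of vertices that preserves the forbidden-subgraph hypothesis, contradicting criticality.

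The hard part is the triangle-free case, and within it, handling the three forbidden subgraphs uniformly. Each of $H$, cross, and $E$ imposes a slightly different obstruction near a high-degree vertex, so the neighborhood analysis must be carried out in parallel with careful bookkeeping on which induced path/claw-like configurations are ruled out in each subcase. I would expect that the $E$-free case is the most delicate because $E$ (a path with two additional pendant edges) is the largest of the three, and excluding it gives the weakest local control; one may need an additional argument tracing induced paths of length $3$ through $N(v)$ to close that subcase.
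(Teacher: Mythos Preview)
The paper does not prove this theorem; it is quoted from Randerath's thesis \cite{Randerath} and used as a black box, so there is no in-paper proof to compare against.

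Your strategic reduction via Olariu's theorem (every connected paw-free graph is triangle-free or complete multipartite) is correct and is the natural opening move; the complete multipartite branch indeed gives $\chi=\omega$ and needs no further hypothesis. The content of the theorem is therefore exactly what you identify: a triangle-free graph that is $H$-free (respectively cross-free, $E$-free) must be $3$-colorable.

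Your outline for that branch, however, is not yet a proof. The claim that a vertex $w\in N_2(v)$ with two neighbours in $N(v)$ ``will realize an induced copy of $H$, cross, or $E$'' is premature: such a $w$ together with $v$ and those two neighbours only yields an induced $C_4$, and building this out to a six-vertex tree like $E$ requires locating further vertices with prescribed non-adjacencies. That is precisely where criticality, $\delta(G)\ge 3$, and a genuine case analysis (different for each of the three forbidden graphs) must be brought to bear, and you have described where that work would happen rather than doing it. The ``contractible pair'' endgame is likewise unspecified: you need to exhibit two non-adjacent vertices whose identification preserves both triangle-freeness and the relevant forbidden-subgraph condition, and it is not automatic that such a pair exists. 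Randerath's original arguments do proceed by neighbourhood analysis in this spirit, and your instinct that the $E$-free case is the most involved is correct, but the substantive case-checking is absent from your proposal.
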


%\newpage  
\section{Results}

We define the following classes.\\

1) Let \(\mathcal{M}\) be the class of \emph{paw-free} graphs.
\vspace{2mm}  

A type of subdivision of the dart graph is considered, as illustrated in Figure \ref{fig1} (b).
\vspace{2mm}

 2) We define class~$\mathcal{L}$ to be the class of graphs that are both \emph{paw-free} and \emph{(sub-dart)-free}.
\\

\begin{figure}[H]
    \centering
    \begin{subfigure}[b]{0.3\textwidth}
        \centering
        \begin{tikzpicture}[scale=.6, transform shape]
            \node [draw, circle, fill=white] (a1) at (0,0) {};
            \node [draw, circle, fill=white] (a2) at (0,-1.5) {};
            \node [draw, circle, fill=white] (a3) at (0,-3) {};
            \node [draw, circle, fill=white] (a4) at (1.5,0) {};
            \node [draw, circle, fill=white] (a5) at (1.5,-1.5) {};
            \node [draw, circle, fill=white] (a6) at (1.5,-3) {};
            \node [scale=1.3] at (-0.5,0) {$a_1$};
            \node [scale=1.3] at (-0.5,-1.5) {$a_2$};
            \node [scale=1.3] at (-0.5,-3) {$a_3$};
            \node [scale=1.3] at (2,0) {$a_4$};
            \node [scale=1.3] at (2,-1.5) {$a_5$};
            \node [scale=1.3] at (2,-3) {$a_6$};
            \draw (a1) -- (a2) -- (a3);  
            \draw (a1) -- (a4);          
            \draw (a2) -- (a5);          
            \draw (a3) -- (a6);  
        \end{tikzpicture}
        \caption{$E-graph$}
        \label{fig:Egraph}
    \end{subfigure}
    \hspace{0.03\textwidth}
    \begin{subfigure}[b]{0.3\textwidth}
        \centering
        % sub-dart
        \begin{tikzpicture}[scale=0.6, transform shape]
            \node [draw, circle, fill=white] (l4) at (3,4) {};
            \node [draw, circle, fill=white] (m)  at (3,3) {};
            \node [draw, circle, fill=white] (l2) at (3,2) {};
            \node [draw, circle, fill=white] (l3) at (3,1) {};
            \node [draw, circle, fill=white] (l1) at (1,3) {};
            \node [draw, circle, fill=white] (l5) at (5,3) {};
            \draw (l4) -- (m) -- (l2) -- (l3);
            \draw (l1) -- (l2);
            \draw (l5) -- (l2);
            \draw (l1) -- (l4);
            \draw (l5) -- (l4);
        \end{tikzpicture}
        \caption{$sub-dart$}
    \end{subfigure}
    \hspace{0.03\textwidth}
    \begin{subfigure}[b]{0.3\textwidth}
        \centering
        % H(3,2)
        \begin{tikzpicture}[scale=0.6, transform shape]
            \node[draw, circle, fill=white] (b1) at (0,0) {};
            \node[draw, circle, fill=white] (b2) at (-3,-2) {};
            \node[draw, circle, fill=white] (b3) at (0,-2) {};
            \node[draw, circle, fill=white] (b4) at (3,-2) {};
            \node[draw, circle, fill=white] (b5) at (-4,-4) {};
            \node[draw, circle, fill=white] (b6) at (-3,-4) {};
            \node[draw, circle, fill=white] (b7) at (-2,-4) {};
            \node[draw, circle, fill=white] (b8) at (-1,-4) {};
            \node[draw, circle, fill=white] (b9) at (0,-4) {};
            \node[draw, circle, fill=white] (b10) at (1,-4) {};
            \node[draw, circle, fill=white] (b11) at (2,-4) {};
            \node[draw, circle, fill=white] (b12) at (3,-4) {};
            \node[draw, circle, fill=white] (b13) at (4,-4) {};
            \node[anchor=south] at (b1.north) {$b_1$};
\node[anchor=east] at (b2.west) {$b_2$};
\node[anchor=south, xshift=7pt] at (b3.north) {$b_3$};
\node[anchor=west] at (b4.east) {$b_4$};
\node[anchor=north] at (b5.south) {$b_5$};
\node[anchor=north] at (b6.south) {$b_6$};
\node[anchor=north] at (b7.south) {$b_7$};
\node[anchor=north] at (b8.south) {$b_8$};
\node[anchor=north] at (b9.south) {$b_9$};
\node[anchor=north] at (b10.south) {$b_{10}$};
\node[anchor=north] at (b11.south) {$b_{11}$};
\node[anchor=north] at (b12.south) {$b_{12}$};
\node[anchor=north] at (b13.south) {$b_{13}$};
            \draw (b1) -- (b2);
            \draw (b1) -- (b3);
            \draw (b1) -- (b4);
            \draw (b2) -- (b5);
            \draw (b2) -- (b6);
            \draw (b2) -- (b7);
            \draw (b3) -- (b8);
            \draw (b3) -- (b9);
            \draw (b3) -- (b10);
            \draw (b4) -- (b11);
            \draw (b4) -- (b12);
            \draw (b4) -- (b13);
        \end{tikzpicture}
        \caption{$H(3,2)$}
    \end{subfigure}

    \vspace{1.2cm} % فاصله بین دو ردیف

    % ردیف دوم: paw و S2
    \begin{subfigure}[b]{0.3\textwidth}
        \centering
        % paw
        \begin{tikzpicture}[scale=.6, transform shape]
            \node [draw, circle, fill=white] (a) at (0,0) {};
            \node [draw, circle, fill=white] (b) at (-1,-1.5) {};
            \node [draw, circle, fill=white] (c) at (1,-1.5) {};
            \node [draw, circle, fill=white] (d) at (0,1.2) {};
            \draw (a) -- (b);
            \draw (b) -- (c);
            \draw (c) -- (a);
            \draw (a) -- (d);
        \end{tikzpicture}
        \caption{$paw-graph$}
    \end{subfigure}
  
    \caption{}\label{fig1}
\end{figure}

%*****************************************************************************************
%#############################################################################################

\begin{definition} Let $T_{1}$ be a tree containing an induced path with $a_{1},...,a_{p}$ vertices, where the end vertex $a_p$ is adjacent to a vertex $x$ of a path $P_2=x,z$ and  a vertex $y$ of a path $P_1=y$. This tree is defined inductively for $p \geq 4$ \emph{(}Since problem has already been proven for radius two\emph{)}, \emph{(Figure \ref{figT1})}.
\end{definition}

\begin{definition} Let $T_{2}$ be a tree containing an induced path with $a_{1},...,a_{p}$ vertices, where the end vertex $a_p$ be adjacent to a vertex $x$ of a path $P_2=x,z$
and  a vertex $y$ of a path $P_2=y,t$. This tree is defined inductively for $p \geq 4$, \emph{(Figure \ref{figT2})}.
\end{definition}

%%%%%%%%%%%%%%%%%%%%%%%%%%%%%%%%%%%%%%%%%%%%%%%%%%%%%%%%%%%%%%%%%%%%%%%%%

\begin{figure}[H]
    \centering
    \begin{tabular}{cc} 

    %================== T1 ==================%
    \begin{minipage}{0.4\textwidth}
        \centering 
        \begin{tikzpicture}[scale=0.85, every node/.style={draw, circle, fill=white, inner sep=2.2pt}]
            \draw[thick] plot [smooth, tension=1] coordinates {(0,0) (1,0.3) (2,0.1) (3,0.2) (4,0)};
            \node[draw=none, fill=none] at (2,0.8) {p-path};
            \node (vp) at (4,0) {};
            \node (leaf) at (5,0.6) {};
            \draw (vp) -- (leaf);
            \node (a1) at (5,-0.5) {};
            \node (a2) at (6,-1) {};
            \draw (vp) -- (a1) -- (a2);
            \node at (vp) {};
            \node at (leaf) {};
            \node at (a1) {};
            \node at (a2) {};
        \end{tikzpicture}
        \caption{$T_1$}\label{figT1}
    \end{minipage}
    
    %================== T2 ==================%
    \begin{minipage}{0.4\textwidth}
        \centering 
        \begin{tikzpicture}[scale=0.85, every node/.style={draw, circle, fill=white, inner sep=2.2pt}]
            \draw[thick] plot [smooth, tension=1] coordinates {(0,0) (1,0.3) (2,0.1) (3,0.2) (4,0)};
            \node[draw=none, fill=none] at (2,0.8) {p-path};
            \node (vp) at (4,0) {};
            \node (b1) at (5,0.6) {};
            \node (b2) at (6,1.2) {};
            \draw (vp) -- (b1) -- (b2);
            \node (a1) at (5,-0.5) {};
            \node (a2) at (6,-1) {};
            \draw (vp) -- (a1) -- (a2);
            \node at (vp) {};
            \node at (a1) {};
            \node at (a2) {};
            \node at (b1) {};
            \node at (b2) {};
        \end{tikzpicture}
        \caption{$T_2$}\label{figT2}
    \end{minipage}

    \end{tabular}
\end{figure}

%%%%%%%%%%%%%%%%%%%%%%%%%%%%%%%%%%%%%%%%%%%%%%%%%%%%%%%%%%%%%%%%%%%%%%%%%%%%%%%%5
 \begin{flushleft} The contrapositive of the Theorem \ref{theo-mgun} shows that if $G\in \mathcal{M} $ and $\chi(G) > \omega(G) + 1$, then graph contains $E$-graph and $H$-graph and cross. 
\end{flushleft}

\begin{theorem}\label{theom-main1}
For all $d \geq 1$ and every $T_1$, there exists a polynomial $f$ such that for all $t \geq 1$, if a graph $G\in \mathcal{M} $ is $T_1$-free and does not contain $K_d(t)$ as a subgraph, then $\chi(G) \leq f(t)$.
\end{theorem}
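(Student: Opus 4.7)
The approach is to apply the balloon lemma (Lemma~\ref{lem-mgun}) with path parameter $p$ (matching the induced path $a_1\text{-}\cdots\text{-}a_p$ of $T_1$) and biclique parameter $t$, reducing the theorem to polynomially bounding the value $s$ of every $t$-biclique and the value $q$ of every $(p,t)$-balloon in $G$. Paw-freeness is exploited via the standard dichotomy that each connected paw-free graph is triangle-free or complete multipartite (the structural content underlying Theorem~\ref{theo-mgun}), while $T_1$-freeness forbids a precise local configuration at the attachment vertex of a long balloon. The theorem is simultaneously proved by induction on $d$, with the bipartite case $d=2$ serving as the base.

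For the balloon bound, let $(P,Y)$ be a $(p,t)$-balloon with $P=v_1\text{-}\cdots\text{-}v_p$, $v_p\in Y$, and value $\chi(Z)$ where $Z=Y\setminus N(v_p)$. Set $N=N(v_p)\cap Y$, so $|N|\ge t$ by $t$-connectedness. Applying the paw-free dichotomy to $G[\{v_p\}\cup N]$ yields two cases: if it is complete multipartite with at least two nontrivial parts in $N$, a paw argument on any edge inside $Z\setminus\{v_p\}$ together with a triangle through $v_p$ and two parts of $N$ forces $Z$ to be independent and so $\chi(Z)\le 1$; otherwise $N$ is independent. In the independent case, the key step uses $T_1$-freeness: if some $z\in Z\setminus\{v_p\}$ has a neighbor $x\in N$ together with a non-neighbor $y\in N\setminus\{x\}$, then $\{v_1,\dots,v_p,x,y,z\}$ induces a copy of $T_1$ (every required non-edge is automatic from the balloon axioms together with the independence of $N$), a contradiction. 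Hence every $z\in Z$ with a neighbor in $N$ is adjacent to all of $N$; letting $Z^*$ denote this subset, the $t$-connectedness of $Y$ implies that either $Z=Z^*$ or $Z^*$ is a vertex-cut of $G[Y]$ separating $v_p$ from the rest, forcing $|Z^*|\ge t$. When $d=2$, the bipartite structure with $|N|\ge t$ and $|Z^*|\ge t$ yields $K_{t,t}\subseteq G$, contradicting $K_2(t)$-freeness, so $|Z^*|<t$ and $\chi(Z)<t$. When $d\ge 3$, the paw-free subgraph $G[Z^*]$ is still $T_1$-free and, crucially, $K_{d-1}(t)$-free (since combining any such subgraph with $N$ would produce $K_d(t)\subseteq G$), so the inductive hypothesis on $d-1$ bounds $\chi(Z^*)$ by a polynomial in $t$.

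A parallel structural analysis handles $t$-bicliques $(X,Y)$: the paw-free dichotomy applied to $G[X\cup Y]$ partitions $Y$ according to how its vertices attach to the parts of $X$; either a $K_d(t)$-subgraph is forced (contradicting the hypothesis) or $\chi(Y)$ reduces, via Theorem~\ref{theodoe24} applied to the radius-two subtree sitting at the end of $T_1$ (the star through $a_p$ containing $x,y,z$), to a polynomial in $t$. Substituting the resulting polynomial bounds on $s$ and $q$ back into Lemma~\ref{lem-mgun} yields the claimed polynomial $f(t)$.

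\textbf{Main obstacle.} The most delicate step is the balloon analysis for $d\ge 3$: the bipartite structure between $N$ and $Z^*$ only directly produces $K_2(t)$, and extracting a true $K_d(t)$ requires the nested induction on $d$ inside $Z^*$, which in turn relies on the fact that forbidding $K_d(t)$ globally is inherited as forbidding $K_{d-1}(t)$ locally on $Z^*$. A secondary technical point is tracking the polynomial degree through the substitution into Lemma~\ref{lem-mgun}'s $t^{p}q$ term, so that $f(t)$ remains polynomial (rather than super-polynomial) in $t$; this needs the biclique and balloon bounds to be of degree independent of $p$, which may require invoking Theorem~\ref{theo-scsys} with a rooted tree $H(s,p)$ whose shape matches the balloon's BFS layering from $v_p$.
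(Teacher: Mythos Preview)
Your overall framework (induction on $d$, Lemma~\ref{lem-mgun} with separate balloon and biclique bounds) matches the paper's, but the heart of the argument---the balloon bound---diverges substantially and contains a genuine gap.

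The paper does \emph{not} analyse $N(v_p)$ at all. Instead it sets the balloon threshold at $dt+2$; since $K_d(t)$-freeness forces $\omega(G)\le dt-1$, the balloon set $Y$ then satisfies $\chi(Y)>\omega(Y)+1$, and the contrapositive of Theorem~\ref{theo-mgun} (Randerath) produces an induced $E$-graph inside $Y$. A shortest path in $Y$ from $v_{p+1}$ to this $E$-graph, together with the balloon path $v_1\text{-}\cdots\text{-}v_{p+1}$, is then shown by a case analysis (Figure~\ref{figfind1}) to contain an induced $T_1$. The biclique bound is handled by the one-line induction you allude to but do not use: if $\chi(Y)>g(t)$ then by the inductive hypothesis $G[Y]\supseteq K_{d-1}(t)$, and adding $X$ gives $K_d(t)$.

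Your alternative balloon argument breaks at the step ``the inductive hypothesis on $d-1$ bounds $\chi(Z^*)$''. Granting that every $z\in Z$ with a neighbour in $N$ is complete to $N$ (your $T_1$-extraction here is correct), induction only controls $\chi(Z^*)$. The value of the balloon is $\chi(Z)$, and nothing in your outline bounds $\chi(Z\setminus Z^*)$, the vertices of $Z$ with \emph{no} neighbour in $N$. Your cut observation gives $|Z^*|\ge t$ when $Z\neq Z^*$, but for $d\ge 3$ this yields only $K_{t,t}$, not a contradiction, and says nothing about the chromatic number of the remainder. So the balloon value is not bounded.

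Two smaller points: in your ``complete multipartite'' case the claim that $Z$ is independent is not justified by the paw argument you sketch (the correct salvage is that $G[Y]$ itself, being connected paw-free with a triangle, is complete multipartite, whence $\chi(Z)\le\omega(Y)\le dt$); and your biclique paragraph is needlessly elaborate---Theorem~\ref{theodoe24} is not required, the direct induction on $d$ suffices.
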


\begin{proof}
We prove the theorem by induction on $d$.

If $d = 1$, then the result holds trivially since graphs without $K_1(t)$ have fewer than $t$ vertices.
Assume $d > 1$ and the result holds for $d-1$ and let $g(t)$ be the corresponding polynomial.

Define
\[
f(t) = (\sum_{i=0}^{p} t^i)\left(g(t) + 1 + t(2t + 9)\right) + t^{p+1} (dt+2).
\]

Let $G$ be a $T_1$-free graph that does not contain $K_{d}(t)$. Then we  show that $\chi(G) \leq f(t)$.
\vspace{2mm}

{\bf Claim 1.} $G$ contains no $(p+1, t)$-balloon of value $dt+2$.

Suppose $(P+1, Y)$ is such a $(p+1, t)$-balloon, then
\[
\chi(Y) \geq dt+2 \geq \omega(G)+2.
\]

By Theorem \ref{theo-mgun}, $Y$ contains an induced subgraph isomorphic to $E$-graph. Since $Y$ is connected, there exists a path between $v_{p}$ and at least one vertex of $E$. Let $Q$ be a minimal (and hence induced) path from $v_{p}$ to $E$.
If $v_{i}$, upon first reaching $E$, is adjacent to multiple vertices in $E$, then some of these adjacencies could potentially result in a paw. However, preventing such a configuration in this setting would violate the uniqueness of $v_i$ as the first point of contact with $E$, which is not acceptable in the given context.

Thus, the structure of the tree $T_1$ must always exist, regardless of whether $v_i$ is adjacent to only one vertex in $E$ or to multiple vertices in $E$, thereby establishing the desired contradiction {(Figure~\ref{figfind1})}.
\vspace{2mm}

{\bf Claim 2.} $G$ contains no $t$-biclique of value $g(t) + 1$. 

If $(X, Y)$ is such a $t$-biclique, from the inductive hypothesis $G[Y]$ contains $K_{d-1}(t)$, hence $G$ contains $K_{d}(t)$, a contradiction.

From Claims (1), (2) and Lemma \ref{lem-mgun}
\[
f(t) = (\sum_{i=0}^{p} t^i)\left(g(t) + 1 + t(2t + 9)\right) + t^{p+1} (dt+2).
\]

Thus the proof is completed.
\end{proof}

Theorem \ref{theodoe24}, by induction, yields the function \( f_1(t) \) as the desired polynomial bound.

\begin{align*}
w(t) &= s^4 t^s + s, \\
f_8(t) &= 3s d^{3s+2} w^{2s-1} t^{3s} \left( f_0(t) + {2s^{2d+2}}{t^{ds + s^2 + s}} \right), \\
f_5(t) = f_6(t) = f_7(t) &= 120 s d^{5s+1} w t^{5s} f_8(t), \\
f_3(t) = f_4(t) &= 2d^{s+1} w t^s f_5(t), \\
f_2(t) &= 2sdw f_3(t), \\
f_1(t) &= f_0 \left({s(s^2 + s + 1)t}^{120(s^2 + s + 1)}w \right) + 2t f_2(t).
\end{align*}\\

Therefore, the theorem also holds for \( H(3,2) \), that is, when \( s = 3 \).
\begin{theorem}\label{theom-main2}
For all $d \geq 1$ and every $T_2$, there exists a polynomial $f$ such that for all $t \geq 1$, if a graph $G\in \mathcal{L} $ is $T_2$-free and does not contain $K_d(t)$ as a subgraph, then $\chi(G) \leq f(t)$.
\end{theorem}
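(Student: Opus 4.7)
The plan is to adapt the inductive scheme of Theorem~\ref{theom-main1} almost verbatim: induct on $d$, construct a polynomial $f(t)$ of the same shape, and verify the two hypotheses of Lemma~\ref{lem-mgun}. The base case $d=1$ is trivial, and for the inductive step I let $g$ be the polynomial from the $(d-1)$-case and set
\[
f(t)=\left(\sum_{i=0}^{p} t^i\right)\bigl(g(t)+1+t(2t+9)\bigr)+t^{p+1}(dt+2),
\]
aiming to apply Lemma~\ref{lem-mgun} with $s=g(t)+1$ and $q=dt+2$. The biclique claim (no $t$-biclique of value $g(t)+1$) is proved exactly as before: a biclique $(X,Y)$ of that value would, by induction applied to $G[Y]$, produce a $K_{d-1}(t)$, and together with $X$ yield a $K_d(t)$ in $G$, a contradiction.

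The balloon claim is where membership in $\mathcal{L}$ is essential. Suppose $(P,Y)$ is a $(p+1,t)$-balloon with value $\chi(Z)\geq dt+2$, where $Z$ is the set of vertices of $Y$ nonadjacent to $v_{p+1}$. Since the absence of $K_d(t)$ forces $\omega(G)<dt$, we obtain $\chi(Z)\geq\omega(G[Z])+2$. Because $G[Z]$ is paw-free, Theorem~\ref{theo-mgun} forces an induced $E$-graph $E\subseteq Z$; denote its degree-three hub by $c$, its two degree-two neighbors by $p,q$ with respective pendants $p',q'$, and its pendant by $r$. Using the $t$-connectivity of $Y$, choose a shortest induced path $Q=v_{p+1}\text{-}u_1\text{-}\cdots\text{-}u_k$ from $v_{p+1}$ into $V(E)$, with $u_k\in V(E)$ and $u_1,\ldots,u_{k-1}\notin V(E)$.

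The crux is to show that the landing vertex $u_k$ is the hub $c$. Paw-freeness (as in the argument for Theorem~\ref{theom-main1}) restricts how $u_{k-1}$ can attach to $V(E)$; but unlike the $T_1$ case, where any landing could be extended to produce the required one $P_2$-branch and one pendant-branch, here two vertex-disjoint induced $P_2$'s at a single hub are required, so the landing must be at the degree-three vertex itself. A landing at any of $p,q,p',q',r$ combined with the residual branches of $E$ and with the internally disjoint $c$-to-$u_{k-1}$ routes (one through the landing vertex and its $E$-neighbors, one through the remaining $P_2$-branch of $E$, and one through a short re-entry using the pendant structure of $E$) realizes an induced $K_{2,3}$ with a pendant attached at a degree-three vertex, i.e., an induced sub-dart; sub-dart-freeness excludes this. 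Hence $u_k=c$. Concatenating the balloon path with $Q$ yields an induced spine ending at $c$ (by the balloon's no-neighbor condition on $v_1,\ldots,v_p$ and the minimality of $Q$), while at $c$ the two legs $c\text{-}p\text{-}p'$ and $c\text{-}q\text{-}q'$ are induced and disjoint from the spine (because $E$ is induced and $u_1,\ldots,u_{k-1}$ have no neighbor in $V(E)\setminus\{c\}$). Truncating the spine to its last $p$ vertices produces an induced $T_2$, contradicting $T_2$-freeness. This establishes the balloon claim, and Lemma~\ref{lem-mgun} then gives $\chi(G)\leq f(t)$.

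The main obstacle is the forced-landing step. In Theorem~\ref{theom-main1} the landing analysis was comparatively gentle because any landing on $E$ could be extended to the required single $P_2$ branch plus pendant needed for $T_1$; for $T_2$ one genuinely needs two vertex-disjoint induced $P_2$'s emanating from a common hub, so the landing point matters. It is precisely the alternative landings that collapse the three $2$-paths emerging from the hub into a sub-dart body, which is why sub-dart-freeness has to be added on top of paw-freeness in the definition of the class $\mathcal{L}$.
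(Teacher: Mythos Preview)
Your inductive scheme and the biclique claim are correct, but the balloon claim has a genuine gap: the ``forced landing at the hub'' assertion is not true. Suppose $u_{k-1}$ has $a_1$ as its \emph{unique} neighbour in $V(E)$ (labels as in Figure~\ref{fig1}(a)). The subgraph induced on the path together with $V(E)$ is then a tree, so it contains neither a paw (no triangle) nor a sub-dart (no $K_{2,3}$), and yet no induced $T_2$ can be extracted from it: taking the hub $a_2$ as centre with the spine entering through $a_1$ leaves only the $P_2$-branch $a_3$--$a_6$ together with the lone pendant $a_5$, which realises $T_1$ but not $T_2$; the alternative entry via $a_5$ would free both $P_2$-branches $a_1$--$a_4$ and $a_3$--$a_6$, but there is no long path reaching $a_5$ in this configuration. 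Your ``three internally disjoint $c$-to-$u_{k-1}$ routes'' simply do not exist when $u_{k-1}$ meets $V(E)$ in a single vertex, so sub-dart-freeness cannot be invoked. The same obstruction arises for single-adjacency landings at $a_3,a_4,a_6$. The $E$-graph is too thin: once one of the two long arms is consumed by the incoming path, only one $P_2$-branch remains.

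The paper sidesteps this by \emph{not} using the $E$-graph. It sets the balloon threshold to $f_1(t)+1$, where $f_1$ is the polynomial supplied by Theorem~\ref{theodoe24} for $H=H(3,2)$; a $(p+1,t)$-balloon of that value forces an induced $H(3,2)$ inside $Z$ (since $G[Z]$ is still $K_d(t)$-free). One then routes an induced path from $v_{p+1}$ into this $H(3,2)$ and carries out the case analysis of Figure~\ref{figfind2}, using paw-freeness and sub-dart-freeness only to restrict the extra adjacencies of the first contact vertex. Because $H(3,2)$ has three $P_2$-branches at its root and three leaves under every child, two disjoint induced $P_2$'s at a common centre survive regardless of the landing point, and $T_2$ is recovered. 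The final polynomial is your displayed $f(t)$ with $dt+2$ replaced by $f_1(t)+1$.
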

\begin{proof}
The proof follows by induction, similar to Theorem \ref{theom-main1}, with the difference that the $(p+1, t)$-balloon of value : $f_1(t)+1$ does not appear. Since \( H(3,2)\) appears within the balloon (for $s=3$), and similarly to the previous proof, There exists an induced path with \( p \) vertices leading to \( H(3,2) \).
Hence, the tree \( T_2 \) must occur in the graph.

Define
\[
f(t) = (\sum_{i=0}^{p} t^i)\left(g(t) + 1 + t(2t + 9)\right) + t^{p+1} (f_1(t)+1).
\]
Under the assumption that the graph is both \emph{paw-free} and \emph{(sub-dart)-free}, {(Figure~\ref{figfind2})} depicts the various configurations by which the path may connect to $H$. Configurations in which the path $Q$ is adjacent to only one vertex in $H(3,2)$ are not shown, as they are straightforward. Dashed edges indicate optional adjacencies and may or may not be present.
\end{proof}

\begin{theorem}\label{theom-main3}
For every integer \( d \geq 1 \) and every tree \( T \), there exists a polynomial \( f \) such that for all integers \( t \) with \( 1 \leq t \leq d - 1 \), if a graph \( G \) is \( T \)-free and contains neither \( K_d(t) \) nor \( K_t(t) \) as a subgraph, then \( \chi(G) \leq f(t) \).

\end{theorem}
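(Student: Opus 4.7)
The plan is to prove Theorem \ref{theom-main3} by induction on $d$, following the template of Theorems \ref{theom-main1} and \ref{theom-main2}, but replacing the tree-specific extraction of $T_1$ or $T_2$ from the balloon by an invocation of Theorem \ref{theo-scsys} so as to accommodate an arbitrary tree $T$. The base cases $d=1,2$ are immediate: the permitted range $1\leq t\leq d-1$ is either empty or forces $G$ to be $K_1$-free, hence empty.

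For the inductive step at $d\geq 3$, I would fix a rooting of $T$ with height $\eta$ and spread $\zeta$, set $p=\eta+1$, and let $M(t)=(|T|\,\zeta\, t)^{(\eta+3)!|T|}$ denote the polynomial bound supplied by Theorem \ref{theo-scsys}. Writing $g(t)$ for the polynomial given by the inductive hypothesis at $d-1$, I would define
\[
f(t)=\left(\sum_{i=0}^{p-1}t^{i}\right)\!\left(g(t)+1+t(2t+9)\right)+t^{p}\!\left(M(t)+1\right),
\]
and invoke Lemma \ref{lem-mgun} with $s=g(t)+1$ and $q=M(t)+1$.

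On the biclique side, any $t$-biclique $(X,Y)$ with $\chi(G[Y])\geq g(t)+1$ would yield, by the inductive hypothesis applied to $G[Y]$ (still $T$-free and $K_t(t)$-free, and $K_{d-1}(t)$-free because a $K_{d-1}(t)\subseteq G[Y]$ would immediately give $K_d(t)\subseteq G$ via $X$), a bound $\chi(G[Y])\leq g(t)$, a contradiction. On the balloon side, a $(p,t)$-balloon $(P,Y)$ of value exceeding $M(t)$ would force $\chi(G[Z])>M(t)$ for the non-neighborhood $Z$ of $v_p$ in $Y$; Theorem \ref{theo-scsys} applied to the $T$-free subgraph $G[Z]$ then produces a $K_{t,t}$ inside $Z$, and a Menger-type routing using the $t$-connectivity of $G[Y]$ together with the induced path $P$ should allow one to absorb this $K_{t,t}$ and $t$ internally disjoint paths through $v_p$ and along $P$ into an additional ``part'' of size $t$. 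Iterating this construction yields a $K_d(t)$ (or, in the tight case, a $K_t(t)$) subgraph of $G$, a contradiction; Lemma \ref{lem-mgun} then delivers $\chi(G)\leq f(t)$.

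The hard part will be this final extraction in the balloon argument, namely converting the configuration ``induced path $+$ $t$-connected $Y$ $+$ $K_{t,t}$ inside $Z$'' into a concrete $K_d(t)$ or $K_t(t)$ subgraph; it is precisely here that the range $1\leq t\leq d-1$ should be invoked, to regulate how many parts can be stacked before the multipartite structure becomes forbidden. A secondary obstacle is the boundary case $t=d-1$, which the naive inductive hypothesis (valid only up to $t\leq d-2$) does not cover; to handle it one would have to strengthen the inductive statement, use Theorems \ref{theotu23-1} or \ref{theotu23-2} as auxiliary base cases, or derive the required subgraph in the biclique step directly from the $K_t(t)$-freeness rather than from induction on $d$.
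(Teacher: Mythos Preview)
Your balloon argument is far more complicated than necessary, and the Menger-routing step you flag as ``the hard part'' is both unnecessary and, as sketched, unworkable: routing $t$ internally disjoint paths through a $t$-connected set does not manufacture the complete bipartite adjacencies required for a new part of a $K_d(t)$.

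The paper's approach is much more direct and explains why the extra hypothesis ``$G$ does not contain $K_t(t)$'' is in the statement. In the balloon claim the paper does \emph{not} invoke Theorem~\ref{theo-scsys} at all; it simply reuses the inductive hypothesis at $d-1$ exactly as in the biclique claim. If a $(p,t)$-balloon $(P,Y)$ has value at least $g(t)+1$, then $G[Y]$ is $T$-free with $\chi(G[Y])>g(t)$, so by the contrapositive of the inductive hypothesis $G[Y]$ contains $K_{d-1}(t)$ (it cannot contain $K_t(t)$, since $G$ is $K_t(t)$-free). But here is the key observation you are missing: because $t\leq d-1$, the graph $K_{d-1}(t)$ already contains $K_t(t)$ as a subgraph (take any $t$ of its $d-1$ parts). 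This contradicts $K_t(t)$-freeness of $G$ directly---no path $P$, no Menger, no extraction. The same argument handles the biclique claim, so the resulting polynomial is
\[
f(t)=\Bigl(\sum_{i=0}^{p-1}t^{i}\Bigr)\bigl(g(t)+1+t(2t+9)\bigr)+t^{p}\bigl(g(t)+1\bigr),
\]
with $q=s=g(t)+1$ in Lemma~\ref{lem-mgun}.

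You are right, however, to worry about the boundary case $t=d-1$: the inductive hypothesis at $d-1$ is only stated for $1\leq t\leq d-2$, so the step above does not literally cover $t=d-1$. The paper glosses over this. One clean fix is to note that for $t=d-1$ the two forbidden subgraphs collapse ($K_d(t)\supseteq K_t(t)=K_{d-1}(d-1)$), so the hypothesis is simply ``$G$ has no $K_{d-1}(d-1)$''; treating this single value of $t$ separately (or strengthening the inductive statement to allow $t\leq d-1$ by absorbing it into the polynomial) is straightforward once the main mechanism above is in place.
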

 \begin{proof}
we prove the theorem by induction on $d$. If \( d = 1 \), then the result is obvious.
Assume $d > 1$ and the result holds for $d-1$. Let $g(t)$ be the corresponding polynomial.

Define
\[
f(t) = (\sum_{i=0}^{p-1} t^i)\left(g(t) + 1 + t(2t + 9)\right) + t^p (g(t)+1).
\]

Let $G$ be $T$-free graph that contains neither \( K_d(t) \) nor \( K_t(t) \) as a subgraph ; we will show that $\chi(G) \leq f(t)$.\\
\item[\bf Claim1:] $G$ contains no $(p, t)$-balloon of value : $g(t)+1$.
from the inductive hypothesis $G[Y]$ contains $K_{d-1}(t)$. Since  \( 1 \leq t \leq d - 1 \) and \( G \) does not contain $K_{t}(t)$ as a subgraph, it is a contradiction.\item[\bf Claim2:] $G$ contains no $t$-biclique of value $g(t) + 1$. A similar conclusion holds.\\
Thus the proof is completed.
\end{proof}

A graph $G$ is called \emph{$d$-degenerate} if every nonnull subgraph has a vertex of degree at most $d$. The degeneracy $\partial(G)$ is the smallest such $d$, Then $\chi(G) \leq \partial(G) + 1$ .\\
From Theorem \ref{theo-scsys}, we have:
\begin{observation}\label{obs-H(s,p)}
For every $H(s,p)$-free graph $G$ that does not contain $K_{t,t}$ as a subgraph, let $c = (p + 3)! |H|$. Then 
$\chi(G) \leq \beta_H(t)$. $(\beta_H(t)=(|H| s t)^c +1)$.
\end{observation}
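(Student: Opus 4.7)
The plan is to derive the observation as a direct corollary of Theorem \ref{theo-scsys} combined with the elementary inequality $\chi(G) \leq \partial(G) + 1$.

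First, I would root $H(s,p)$ at the distinguished degree-$s$ vertex prescribed in its definition. By construction, from this root every descending path has length exactly $p$, so the rooted tree $(H(s,p),r)$ has height $\eta = p$. Every internal vertex (including the root) has exactly $s$ children in the level-by-level expansion, so the spread is $\zeta = s$. Assuming $s \geq 2$ (so that the hypothesis $\zeta \geq 2$ of Theorem \ref{theo-scsys} is satisfied) and $t \geq 1$, the theorem applies to any $H(s,p)$-free graph $G$ that does not contain $K_{t,t}$ as a subgraph, yielding
\[
\partial(G) \leq (|H|\,s\,t)^{c}, \qquad c = (p+3)!\,|H|,
\]
where I identify $|H|$ with $|H(s,p)| = 1 + s + s^{2} + \dots + s^{p}$.

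Next, since every graph admits a greedy coloring using an ordering in which each vertex has at most $\partial(G)$ earlier neighbors, we have $\chi(G) \leq \partial(G) + 1$. Combining the two inequalities gives
\[
\chi(G) \;\leq\; \partial(G) + 1 \;\leq\; (|H|\,s\,t)^{c} + 1 \;=\; \beta_H(t),
\]
which is exactly the claimed bound.

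There is essentially no obstacle beyond bookkeeping: one just needs to verify that the parameters of $H(s,p)$ (its height and spread as a rooted tree) match the $\eta$ and $\zeta$ in the hypothesis of Theorem \ref{theo-scsys} and that the corresponding constant $c$ transcribes correctly. The only edge case worth a remark is $s = 1$, where Theorem \ref{theo-scsys} does not formally apply; in that case $H(1,p)$ is just a path on $p+1$ vertices, and the conclusion already follows from Theorem \ref{theotu23-1}, so the observation still holds.
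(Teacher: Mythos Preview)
Your argument is correct and is essentially identical to the paper's own derivation: the observation is stated immediately after the inequality $\chi(G)\le\partial(G)+1$ and the phrase ``From Theorem~\ref{theo-scsys}, we have,'' so the paper likewise obtains it by plugging $\eta=p$, $\zeta=s$ into Theorem~\ref{theo-scsys} and adding~$1$. Your extra remark on the $s=1$ edge case is a nice addition that the paper does not mention.
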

The contrapositive of the Observation \ref{obs-H(s,p)} shows that if $t>\tau_2(G)$ and $\chi(G)>\beta_H(t)$ then the graph contains $H$.
For every tree $T$, there exist integers $s$ and $p$ such that $T \subseteq H(s,p)$. 
Without loss of generality, we consider the vertices of maximum degree in the tree $T$, and root the tree at each such vertex, one at a time. Let $p$ denote the minimum height among all the resulting rooted trees, and let $s$ be the maximum degree in $T$.
For integers $s$ and $p$, we define the function $\beta_T(t)$ as follows:
 $\beta_T(t)=((\sum_{i=0}^{p} s^{i+1})t)^{(p + 3)!\sum_{i=0}^{p} s^{i}} +1 $ .\\

For every tree T and $t,d \geq1$, the class \(\mathcal{F}_{T} = \mathcal{F}_1 \bigcup \mathcal{F}_{2}\) is 
defined as follows.

\begin{itemize}

    \item \(\mathcal{F}_1\): The class of graphs that \(\tau_2(G)< t\).
          
    \item  \(\mathcal{F}_{2}\): The class of graphs that,  \(\tau_2(G)\geq t\), and satisfy the condition: 
          If there exists a  $K_2(\beta_T(t))$ in the graph, then at least one of the parts of $K_2(\beta_T(t))$ contains $K_2(t)$. 
          It is evident that in this case, \(\tau_3(G)\geq t\).      
\end{itemize}
\begin{theorem}\label{theom-main4}
For all $d \geq 1$ and every tree $T$, there exists a polynomial $f$ such that for all $t \geq 1$, if a graph $G\in \mathcal{F}_{T}\ $ is $T$-free and does not contain $K_d(t)$ as a subgraph, then $\chi(G) \leq f(t)$.
\end{theorem}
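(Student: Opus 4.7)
The plan is to mirror the structure of the proofs of Theorems \ref{theom-main1} and \ref{theom-main2}: induct on $d$, split the hypothesis $G\in\mathcal{F}_T$ into the two cases $\mathcal{F}_1$ and $\mathcal{F}_2$, and invoke Lemma \ref{lem-mgun} in the harder case with a biclique claim and a balloon claim.

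First I would dispose of $G\in\mathcal{F}_1$ directly. Here $\tau_2(G)<t$, so $G$ contains no $K_{t,t}$. Choosing $s,p$ as in the paragraph preceding the theorem so that $T\subseteq H(s,p)$, the $T$-freeness of $G$ implies $H(s,p)$-freeness (trees embed as induced subgraphs, so any induced copy of $H(s,p)$ would contain an induced copy of $T$). Observation \ref{obs-H(s,p)} then gives $\chi(G)\le\beta_T(t)$, a polynomial bound independent of $d$.

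For $G\in\mathcal{F}_2$ I proceed by induction on $d$; the base $d=1$ is trivial because a $K_{1}(t)$-free graph has fewer than $t$ vertices. Assuming the theorem holds for $d-1$ with polynomial $g$, set
\[
f(t)=\Bigl(\sum_{i=0}^{p-1} t^{i}\Bigr)\bigl(g(t)+1+t(2t+9)\bigr)+t^{p}\bigl(\beta_T(t)+1\bigr).
\]
Since $\mathcal{F}_T$ is easily seen to be hereditary, the biclique claim follows exactly as in Theorems \ref{theom-main1} and \ref{theom-main3}: if $(X,Y)$ is a $t$-biclique of value at least $g(t)+1$, then $G[Y]\in\mathcal{F}_T$ is $T$-free, so by induction $G[Y]$ contains $K_{d-1}(t)$; adjoining $X$ (of size $t$, fully adjacent to $Y$) as a new part produces $K_{d}(t)\subseteq G$, a contradiction.

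The balloon claim is the heart of the argument: $G$ has no $(p,t)$-balloon $(P,Y)$ of value $\beta_T(t)+1$. Granting this, Lemma \ref{lem-mgun} with $s=g(t)+1$ and $q=\beta_T(t)+1$ immediately yields $\chi(G)\le f(t)$. To prove the claim, suppose such a balloon exists and let $Z\subseteq Y$ consist of the vertices nonadjacent to $v_p$, so $\chi(Z)>\beta_T(t)$. Since $G[Z]$ is $T$-free and hence $H(s,p)$-free, Observation \ref{obs-H(s,p)} forces a $K_{t,t}\subseteq G[Z]$. The main obstacle is then to combine this $K_{t,t}$ with the balloon path $P=v_1\text{-}\cdots\text{-}v_{p}$ and the $t$-connectedness of $G[Y]$ to exhibit an induced copy of $T$ in $G$: $P$ supplies the spine of a rooted drawing of $T$ inside $H(s,p)$, while $v_p$ together with the $K_{t,t}$ and the $t$-connected surroundings in $Y$ supplies the branching. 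The balloon conditions that $v_{1},\ldots,v_{p-2}$ have no neighbor in $Y$ and that $v_p$ is the unique neighbor of $v_{p-1}$ in $Y$ are precisely what rules out chords back onto earlier vertices of $P$ that would destroy inducedness; verifying this chord-free embedding is the step I expect to be the most delicate, and will proceed by a figure-by-figure case analysis analogous to the one used for $T_{2}$ in Theorem \ref{theom-main2}.
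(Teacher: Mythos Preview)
Your balloon argument has a genuine gap. You propose to extract an induced copy of an \emph{arbitrary} tree $T$ from the $K_{t,t}$ inside $Z$, the balloon path $P$, and the $t$-connectedness of $G[Y]$. But the case analyses in Theorems~\ref{theom-main1} and~\ref{theom-main2} succeed only because $T_1$ and $T_2$ have a very special shape (a long path with a tiny claw attached at one end) and because of the additional paw-free and (sub-dart)-free hypotheses. For a general tree $T$ of height $p$ and spread $s$ there is no mechanism by which a single $K_{t,t}$ together with a $p$-vertex path yields the full branching structure of $T$ as an induced subgraph; carrying this out would essentially resolve the analogue of Esperet's question for every tree, which is precisely the open problem the paper is working around, not something a ``figure-by-figure case analysis'' can dispatch.

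The deeper issue is that your plan never invokes the defining property of $\mathcal{F}_2$, which is the entire point of introducing that class. In the paper the balloon value is taken to be $g(\beta_T(t))+1$ (not $\beta_T(t)+1$), and the \emph{inductive hypothesis}, rather than Observation~\ref{obs-H(s,p)}, is applied inside the balloon: from $\chi(Y)\ge g(\beta_T(t))+1$ one concludes that $G[Y]$ contains $K_{d-1}(\beta_T(t))$. Since $G\in\mathcal{F}_2$ forces $\tau_2(G)\ge t$ while $G$ is $K_d(t)$-free, we must have $d\ge 3$, so $d-1\ge 2$ and this $K_{d-1}(\beta_T(t))$ contains a $K_2(\beta_T(t))$. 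The $\mathcal{F}_2$ hypothesis now says one of its two parts itself contains a $K_2(t)$; splitting that part in two promotes the $K_{d-1}(\beta_T(t))$ to a $K_d(t)\subseteq G$, the desired contradiction. Thus the contradiction in the balloon step comes from producing $K_d(t)$, not from producing an induced $T$, and accordingly the last term of $f(t)$ should be $t^{p}\bigl(g(\beta_T(t))+1\bigr)$ rather than $t^{p}\bigl(\beta_T(t)+1\bigr)$.
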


\begin{proof}
For every tree $T$, there exist integers $s$ and $p$ such that $T \subseteq H(s,p)$. Clearly, if the graph contains \( H(s,p) \), then it must also contain \( T \).

We prove the theorem by induction on $d$. If \( d = 1 \), then the result is obvious.
Assume $d > 1$ and the result holds for $d-1$. Let $g(t)$ be the corresponding polynomial.

Define
\[
f(t) = (\sum_{i=0}^{p-1} t^i)\left(g(t) + 1 + t(2t + 9)\right) + t^p (g(\beta_T(t))+1).
\]

Let $G$ be $T$-free graph that does not contain $K_{d}(t)$; we will show that $\chi(G) \leq f(t)$.\\
a) If $G\in \mathcal {F}_{1}$, then by Observation \ref{obs-H(s,p)}, $\chi(G) \leq \beta_T(t)$. since we have assumed $f$ is a non-decreasing function; hence, $g$ is also non-decreasing so $\beta_T(t) \leq g(\beta_T(t))$.\\
b) If   $G\in \mathcal {F}_{2}$ The following two claims hold.

\begin{enumerate}
\item[\bf Claim1:] $G$ contains no $(p, t)$-balloon of value: $g(\beta_T(t))+1$.

Suppose $(P, Y)$ is such a $(p, t)$-balloon. Then,
\[
\chi(Y) \geq g(\beta_T(t))+1 
\]

from the inductive hypothesis $G[Y]$ contains $K_{d-1}(\beta_T(t))$.
if $G\in \mathcal{F}_{2}\ $,Since $G$ contains $ K_2(t)$ in this class,it follows that \( d \geq 3 \) hence $G$ contains $K_{d}(t)$, a contradiction.
\item[\bf Claim2:] $G$ contains no $t$-biclique of value $g(t) + 1$. A similar conclusion holds.
\end{enumerate}

From Claimes (1), (2) and Lemma 1,
\[
f(t) = (\sum_{i=0}^{p-1} t^i)\left(g(t) + 1 + t(2t + 9)\right) + t^p (g(\beta_T(t))+1).
\]

This proves Theorem 11.
\end{proof}

 \begin{flushleft}\textbf{Note:} The class $\mathcal{F}_2$ is not hereditary.
 \end{flushleft}
%%%%%%%%%%%%%%%%%%%%%%%%%%%%%%%%%%%%%%%%%%%%%%%%%%%%%%%%%%%%%%%%%%%%%%

\begin{figure}[H]
    \centering
 \begin{tabular}{cccc} 
    % شکل 1
    \begin{minipage}{0.19\textwidth}
        \centering
        \begin{tikzpicture}[scale=.5, transform shape]
            \node [draw, circle, fill=white] (a1) at (0,0) {};
            \node [draw, circle, fill=white] (a2) at (0,-1.5) {a2};
            \node [draw, circle, fill=white] (a3) at (0,-3) {};
            \node [draw, circle, fill=white] (a4) at (1.5,0) {};
            \node [draw, circle, fill=white] (a5) at (1.5,-1.5) {};
            \node [draw, circle, fill=white] (a6) at (1.5,-3) {};
            \node [draw, circle, fill=white] (vi) at (-1.8,-1.5) {vi};
            \node [draw, circle, fill=white] (vi1) at (-3,-1.5) {};
            \draw[dotted, thick] (-4.2,-1.5) -- (-3.5,-1.5);
            \draw[line width=1pt, gray!60, preaction={draw=gray!30, line width=4pt}] (vi1) -- (vi);
            \draw[line width=1pt, gray!60, preaction={draw=gray!30, line width=4pt}] (vi) -- (a2);
            \draw[line width=1pt, gray!60, preaction={draw=gray!30, line width=4pt}] (vi) -- (a4);
            \draw[line width=1pt, gray!60, preaction={draw=gray!30, line width=4pt}] (a2) -- (a5);
            \draw (a1) -- (a2) -- (a3);
            \draw (a1) -- (a4);
            \draw (a3) -- (a6);
            \draw (vi) -- (a6);
        \end{tikzpicture}
    \end{minipage}
    
    %%%%%%%%%%%%%%%%%%%%%%%%%%%%%%%%%%%%%%%%%%%%%%%%%%%%%%%%%%%%%%%
     \vspace{2mm}
    %%%%%%%%%%%%%%%%%%%%%%%%%%%%%%%%%%%%%%%%%%%%%%%%%%%%%%%%%%%%%%%%%%%%%%%%%%%%%%%%%%%%%%
    \begin{minipage}{0.19\textwidth}
        \centering
        \begin{tikzpicture}[scale=.5, transform shape]
            \node [draw, circle, fill=white] (a1) at (0,0) {};
            \node [draw, circle, fill=white] (a2) at (0,-1.5) {a2};
            \node [draw, circle, fill=white] (a3) at (0,-3) {};
            \node [draw, circle, fill=white] (a4) at (1.5,0) {};
            \node [draw, circle, fill=white] (a5) at (1.5,-1.5) {};
            \node [draw, circle, fill=white] (a6) at (1.5,-3) {};
            \node [draw, circle, fill=white] (vi) at (-1.8,-1.5) {};
            \node [draw, circle, fill=white] (vi1) at (-3,-1.5) {};
            \draw[dotted, thick] (-4.2,-1.5) -- (-3.5,-1.5);
            \draw[line width=1pt, gray!60, preaction={draw=gray!30, line width=4pt}] (vi1) -- (vi);
            \draw[line width=1pt, gray!60, preaction={draw=gray!30, line width=4pt}] (vi) -- (a2);
            \draw[line width=1pt, gray!60, preaction={draw=gray!30, line width=4pt}] (a2) -- (a5);
            \draw[line width=1pt, gray!60, preaction={draw=gray!30, line width=4pt}] (a2) -- (a3);
            \draw[line width=1pt, gray!60, preaction={draw=gray!30, line width=4pt}] (a3) -- (a6);
            \draw (a1) -- (a2);
            \draw (a1) -- (a4);
            \draw (vi) -- (a4);
        \end{tikzpicture}

    \end{minipage}\\
   %%%%%%%%%%%%%%%%%%%%%%%%%%%%%%%%%%%%%%%%%%%%%%%%%%%%%%%%%%%%%%%%%%%%%%%%%%%%%%%%%%%%%%%%%%%%%%%%%%%%55

    \begin{minipage}{0.19\textwidth}
        \centering
        \begin{tikzpicture}[scale=.5, transform shape]
    \node [draw, circle, fill=white] (a1) at (0,0) {a1};
    \node [draw, circle, fill=white] (a2) at (0,-1.5) {};
    \node [draw, circle, fill=white] (a3) at (0,-3) {};
    \node [draw, circle, fill=white] (a4) at (1.5,0) {};
    \node [draw, circle, fill=white] (a5) at (1.5,-1.5) {};
    \node [draw, circle, fill=white] (a6) at (1.5,-3) {};
    \node [draw, circle, fill=white] (vi) at (-1.8,0) {vi};  
    \node [draw, circle, fill=white] (vi1) at (-3,0) {};   
    \draw[dotted, thick] (-4.2,0) -- (-3.5,0);
    \draw[line width=1pt, gray!60, preaction={draw=gray!30, line width=4pt}] (vi1) -- (vi);
    \draw[line width=1pt, gray!60, preaction={draw=gray!30, line width=4pt}] (vi) -- (a1);
    \draw[line width=1pt, gray!60, preaction={draw=gray!30, line width=4pt}] (vi) -- (a5);
    \draw[line width=1pt, gray!60, preaction={draw=gray!30, line width=4pt}] (a1) -- (a4);
    \draw (a1) -- (a2);
    \draw (a2) -- (a3);
    \draw (a2) -- (a5);
    \draw (a3) -- (a6);
\end{tikzpicture}

    \end{minipage}
    %%%%%%%%%%%%%%%%%%%%%%%%%%%%%%%%%%%%%%%%%%%%%%%%%%%%%%%%%%%%%%%%%%%%%%%%%%%%%%%

  % \hfill

    \begin{minipage}{0.19\textwidth}
        \centering
        % جای شکل چهارم
        \begin{tikzpicture}[scale=.5,transform shape]
     % رأس‌ها
    \node [draw, circle, fill=white] (a1) at (0,0) {a1};
    \node [draw, circle, fill=white] (a2) at (0,-1.5) {};
    \node [draw, circle, fill=white] (a3) at (0,-3) {};
    \node [draw, circle, fill=white] (a4) at (1.5,0) {};
    \node [draw, circle, fill=white] (a5) at (1.5,-1.5) {};
    \node [draw, circle, fill=white] (a6) at (1.5,-3) {};
    \node [draw, circle, fill=white] (vi) at (-1.8,0) {};
    \node [draw, circle, fill=white] (vi1) at (-3,0) {};
    \draw[dotted, thick] (-4.2,0) -- (-3.5,0);
    \draw[line width=1pt, gray!60, preaction={draw=gray!30, line width=4pt}] (vi1) -- (vi);
    \draw[line width=1pt, gray!60, preaction={draw=gray!30, line width=4pt}] (vi) -- (a1);
    \draw[line width=1pt, gray!60, preaction={draw=gray!30, line width=4pt}] 
        (vi) to[out=-20, in=210] (a6); 
    \draw[line width=1pt, gray!60, preaction={draw=gray!30, line width=4pt}] (a1) -- (a4);
    \draw (a1) -- (a2);
    \draw (a2) -- (a3);
    \draw (a2) -- (a5);
    \draw (a3) -- (a6);
\end{tikzpicture}
    \end{minipage}
    %%%%%%%%%%%%%%%%%%%%%%%%%%%%%%%%%%%%%%%%%%%%%%%%%%%%%%%%%%%%%%%%%%%%%%%%%%%%%%%%%%%%%%%%%5
    \begin{minipage}{0.19\textwidth}
            \centering
  \begin{tikzpicture}[scale=.5, transform shape]
    \node [draw, circle, fill=white] (a1) at (0,0) {a1};
    \node [draw, circle, fill=white] (a2) at (0,-1.5) {};
    \node [draw, circle, fill=white] (a3) at (0,-3) {};
    \node [draw, circle, fill=white] (a4) at (1.5,0) {};
    \node [draw, circle, fill=white] (a5) at (1.5,-1.5) {};
    \node [draw, circle, fill=white] (a6) at (1.5,-3) {};
    \node [draw, circle, fill=white] (vi) at (-1.8,0) {};
    \node [draw, circle, fill=white] (vi1) at (-3,0) {};
    \draw[dotted, thick] (-4.2,0) -- (-3.5,0);
    \draw[line width=1pt, gray!60, preaction={draw=gray!30, line width=4pt}] (vi1) -- (vi);
    \draw[line width=1pt, gray!60, preaction={draw=gray!30, line width=4pt}] (vi) -- (a1);
    \draw[line width=1pt, gray!60, preaction={draw=gray!30, line width=4pt}] (vi) -- (a3);
    \draw[line width=1pt, gray!60, preaction={draw=gray!30, line width=4pt}] (a1) -- (a4);
    \draw (a1) -- (a2);
    \draw (a2) -- (a3);
    \draw (a2) -- (a5);
    \draw (a3) -- (a6);
  \end{tikzpicture}
 
  \end{minipage}
%%%%%%%%%%%%%%%%%%%%%%%%%%%%%%%%%%%%%%%%%%%%%%%%%%%%%%%%%%%%%%%%%%%%%%%%%%%%%%%%%%%
\vspace{2mm}
 %%%%%%%%%%%%%%%%%%%%%%%%%%%%%%%%%%%%%%%%%%%%%%%%%%%%%%%%%%%%%%%%%%%%%%%%%%%%%%%%%%%%%55
  \begin{minipage}{0.19\textwidth}
  
\begin{tikzpicture}[scale=.5, transform shape]
    \node [draw, circle, fill=white] (a1) at (0,0) {a1};
    \node [draw, circle, fill=white] (a2) at (0,-1.5) {};
    \node [draw, circle, fill=white] (a3) at (0,-3) {};
    \node [draw, circle, fill=white] (a4) at (1.5,0) {};
    \node [draw, circle, fill=white] (a5) at (1.5,-1.5) {};
    \node [draw, circle, fill=white] (a6) at (1.5,-3) {};
    \node [draw, circle, fill=white] (vi) at (-1.8,0) {}; 
    \node [draw, circle, fill=white] (vi1) at (-3,0) {};   
    \draw[dotted, thick] (-4.2,0) -- (-3.5,0);
    \draw[line width=1pt, gray!60, preaction={draw=gray!30, line width=4pt}] (vi1) -- (vi);
    \draw[line width=1pt, gray!60, preaction={draw=gray!30, line width=4pt}] (vi) -- (a1);
    \draw[line width=1pt, gray!60, preaction={draw=gray!30, line width=4pt}] (vi) -- (a5);
    \draw[line width=1pt, gray!60, preaction={draw=gray!30, line width=4pt}] (a1) -- (a4);
    \draw (a1) -- (a2);
    \draw (a2) -- (a3);
    \draw (a2) -- (a5);
    \draw (a3) -- (a6);
   
   \draw (vi) to[out=-20, in=210] (a6);
\end{tikzpicture}

    \end{minipage}
    
          %%%%%%%%%%%%%%%%%%%%%%%%%%%%%%%%%%%%%%%%%%%%%%%%%%%%%%%%%%%%%%%%%%%%%%%%%%%%%%%%%%%%%%%%%%%%%%%%
    \begin{minipage}{0.19\textwidth}
        \centering
        % جای شکل چهارم
        \begin{tikzpicture}[scale=.5,transform shape]
     % رأس‌ها
    \node [draw, circle, fill=white] (a1) at (0,0) {a1};
    \node [draw, circle, fill=white] (a2) at (0,-1.5) {};
    \node [draw, circle, fill=white] (a3) at (0,-3) {};
    \node [draw, circle, fill=white] (a4) at (1.5,0) {};
    \node [draw, circle, fill=white] (a5) at (1.5,-1.5) {};
    \node [draw, circle, fill=white] (a6) at (1.5,-3) {};
    \node [draw, circle, fill=white] (vi) at (-1.8,0) {};
    \node [draw, circle, fill=white] (vi1) at (-3,0) {};
    \draw[dotted, thick] (-4.2,0) -- (-3.5,0);
    \draw[line width=1pt, gray!60, preaction={draw=gray!30, line width=4pt}] (vi1) -- (vi);
    \draw[line width=1pt, gray!60, preaction={draw=gray!30, line width=4pt}] (vi) -- (a1);
    \draw[line width=1pt, gray!60, preaction={draw=gray!30, line width=4pt}] (vi) -- (a3); 
    \draw[line width=1pt, gray!60, preaction={draw=gray!30, line width=4pt}] (a1) -- (a4);
    \draw (a1) -- (a2);
    \draw (a2) -- (a3);
    \draw (a2) -- (a5);
    \draw (a3) -- (a6);
    \draw (vi) -- (a5);
\end{tikzpicture}
    \end{minipage}\\
   %%%%%%%%%%%%%%%%%%%%%%%%%%%%%%%%%%%%%%%%%%%%%%%%%%%%%%%%%%%%%%%%%%%%%%%%%%%
   \vspace{2mm}
    
   %%%%%%%%%%%%%%%%%%%%%%%%%%%%%%%%%%%%%%%%%%%%%%%%%%%%%%%%%%%%%%%%%%%%%%%%%%%%%%%%%%%%%
    \begin{minipage}{0.19\textwidth}
        \centering
\begin{tikzpicture}[scale=0.5, transform shape]
    \node [draw, circle, fill=white] (a1) at (0,0) {};
    \node [draw, circle, fill=white] (a2) at (0,-1.5) {};
    \node [draw, circle, fill=white] (a3) at (0,-3) {};
    \node [draw, circle, fill=white] (a4) at (1.5,0) {a4};
    \node [draw, circle, fill=white] (a5) at (1.5,-1.5) {};
    \node [draw, circle, fill=white] (a6) at (1.5,-3) {};
    \node [draw, circle, fill=white] (vi) at (3.5,0) {vi};
    \node [draw, circle, fill=white] (vi1) at (5,0) {};
    \draw[dotted, thick] (vi1) -- (6,0);
    \draw (a1) -- (a2);  
    \draw (a1) -- (a4);
    \draw[line width=1pt, gray!60, preaction={draw=gray!30, line width=4pt}] (a2) -- (a3);    
    \draw[line width=1pt, gray!60, preaction={draw=gray!30, line width=4pt}] (a2) -- (a5);          
    \draw[line width=1pt, gray!60, preaction={draw=gray!30, line width=4pt}] (a3) -- (a6);  
    \draw[line width=1pt, gray!60, preaction={draw=gray!30, line width=4pt}] (vi) -- (a2);
    \draw (a4) -- (vi);
    \draw[line width=1pt, gray!60, preaction={draw=gray!30, line width=4pt}] (vi) -- (vi1);

\end{tikzpicture}
   \end{minipage} 
  %%%%%%%%%%%%%%%%%%%%%%%%%%%%%%%%%%%%%%%%%%%%%%%%%%%%%%%%%%%%%%%%%%%%%%%%%%%%%%%%%%%%%%%%%%%5
   
    \begin{minipage}{0.19\textwidth}
        \centering
\begin{tikzpicture}[scale=0.5, transform shape]

    \node [draw, circle, fill=white] (a1) at (0,0) {};
    \node [draw, circle, fill=white] (a2) at (0,-1.5) {};
    \node [draw, circle, fill=white] (a3) at (0,-3) {};
    \node [draw, circle, fill=white] (a4) at (1.5,0) {a4};
    \node [draw, circle, fill=white] (a5) at (1.5,-1.5) {};
    \node [draw, circle, fill=white] (a6) at (1.5,-3) {};
    \node [draw, circle, fill=white] (vi) at (3.5,0) {};
    \node [draw, circle, fill=white] (vi1) at (5,0) {};
    \draw[dotted, thick] (vi1) -- (6,0);
    \draw[line width=1pt, gray!60, preaction={draw=gray!30, line width=4pt}] (a1) -- (a2);  
    \draw (a1) -- (a4);
    \draw[line width=1pt, gray!60, preaction={draw=gray!30, line width=4pt}] (a2) -- (a3);    
    \draw[line width=1pt, gray!60, preaction={draw=gray!30, line width=4pt}] (a2) -- (a5);          
    \draw[line width=1pt, gray!60, preaction={draw=gray!30, line width=4pt}] (a3) -- (a6);  
    \draw[line width=1pt, gray!60, preaction={draw=gray!30, line width=4pt}] (vi) -- (a5);
    \draw (a4) -- (vi);
    \draw[line width=1pt, gray!60, preaction={draw=gray!30, line width=4pt}] (vi) -- (vi1);

\end{tikzpicture}
   \end{minipage} 
   %%%%%%%%%%%%%%%%%%%%%%%%%%%%%%%%%%%%%%%%%%%%%%%%%%%%%%%%%%%%%%%%%%%%%%555
    \begin{minipage}{0.19\textwidth}
    \centering
\begin{tikzpicture}[scale=0.5, transform shape]

    \node [draw, circle, fill=white] (a1) at (0,0) {};
    \node [draw, circle, fill=white] (a2) at (0,-1.5) {};
    \node [draw, circle, fill=white] (a3) at (0,-3) {};
    \node [draw, circle, fill=white] (a4) at (1.5,0) {a4};
    \node [draw, circle, fill=white] (a5) at (1.5,-1.5) {};
    \node [draw, circle, fill=white] (a6) at (1.5,-3) {};
    \node [draw, circle, fill=white] (vi) at (3.5,0) {};
    \node [draw, circle, fill=white] (vi1) at (5,0) {};
    \draw[dotted, thick] (vi1) -- (6,0);
    \draw (a1) -- (a2);  
    \draw (a1) -- (a4);
    \draw[line width=1pt, gray!60, preaction={draw=gray!30, line width=4pt}] (a2) -- (a3);    
    \draw[line width=1pt, gray!60, preaction={draw=gray!30, line width=4pt}] (a2) -- (a5);          
    \draw[line width=1pt, gray!60, preaction={draw=gray!30, line width=4pt}] (a3) -- (a6);  
    \draw[line width=1pt, gray!60, preaction={draw=gray!30, line width=4pt}] (vi) -- (a2);
    \draw (a4) -- (vi);
    \draw[line width=1pt, gray!60, preaction={draw=gray!30, line width=4pt}] (vi) -- (vi1);
\end{tikzpicture}
\end{minipage}
 %%%%%%%%%%%%%%%%%%%%%%%%%%%%%%%%%%%%%%%%%%%%%%%%%%%%%%%%%%%%%%%%%%%%%%%%%%%%%%%
    \begin{minipage}{0.19\textwidth}
    \centering
\begin{tikzpicture}[scale=0.5, transform shape]

    \node [draw, circle, fill=white] (a1) at (0,0) {};
    \node [draw, circle, fill=white] (a2) at (0,-1.5) {};
    \node [draw, circle, fill=white] (a3) at (0,-3) {};
    \node [draw, circle, fill=white] (a4) at (1.5,0) {a4};
    \node [draw, circle, fill=white] (a5) at (1.5,-1.5) {};
    \node [draw, circle, fill=white] (a6) at (1.5,-3) {};
    \node [draw, circle, fill=white] (vi) at (3.5,0) {};
    \node [draw, circle, fill=white] (vi1) at (5,0) {};
    \draw[dotted, thick] (vi1) -- (6,0);
    \draw[line width=1pt, gray!60, preaction={draw=gray!30, line width=4pt}] (a1) -- (a2);  
    \draw (a1) -- (a4);
    \draw[line width=1pt, gray!60, preaction={draw=gray!30, line width=4pt}] (a2) -- (a3);    
    \draw (a2) -- (a5);          
    \draw[line width=1pt, gray!60, preaction={draw=gray!30, line width=4pt}] (a3) -- (a6);  
    \draw[line width=1pt, gray!60, preaction={draw=gray!30, line width=4pt}] (vi) to[out=-50, in=40] (a3);
    \draw (a4) -- (vi);
    \draw[line width=1pt, gray!60, preaction={draw=gray!30, line width=4pt}] (vi) -- (vi1);

\end{tikzpicture}
\end{minipage}
   %%%%%%%%%%%%%%%%%%%%%%%%%%%%%%%%%%%%%%%%%%%%%%%%%%%%%%%%%%%%%%%%%%%%%%%%%%%%
   \begin{minipage}{0.19\textwidth}
    \centering
\begin{tikzpicture}[scale=0.5, transform shape]

    \node [draw, circle, fill=white] (a1) at (0,0) {};
    \node [draw, circle, fill=white] (a2) at (0,-1.5) {};
    \node [draw, circle, fill=white] (a3) at (0,-3) {};
    \node [draw, circle, fill=white] (a4) at (1.5,0) {a4};
    \node [draw, circle, fill=white] (a5) at (1.5,-1.5) {};
    \node [draw, circle, fill=white] (a6) at (1.5,-3) {};
    \node [draw, circle, fill=white] (vi) at (3.5,0) {};
    \node [draw, circle, fill=white] (vi1) at (5,0) {};
    \draw[dotted, thick] (vi1) -- (6,0);
    \draw (a1) -- (a2);  
    \draw (a1) -- (a4);
    \draw (a2) -- (a3);    
    \draw (a2) -- (a5);          
    \draw[line width=1pt, gray!60, preaction={draw=gray!30, line width=4pt}] (a3) -- (a6);  
    \draw[line width=1pt, gray!60, preaction={draw=gray!30, line width=4pt}] (vi) to[out=-50, in=40] (a6);
    \draw[line width=1pt, gray!60, preaction={draw=gray!30, line width=4pt}] (a4) -- (vi);
    \draw[line width=1pt, gray!60, preaction={draw=gray!30, line width=4pt}] (vi) -- (vi1);

\end{tikzpicture}
\end{minipage}\\
%%%%%%%%%%%%%%%%%%%%%%%%%%%%%%%%%%%%%%%%%%%%%%%%%%%%%%%%%%%%%%%%%%%%%%%%%%%%%%%%%%%%%%%%
   \vspace{2mm}
   %%%%%%%%%%%%%%%%%%%%%%%%%%%%%%%%%%%%%%%%%%%%%%%%%%%%%%%%%%%%%%%%%%%%%%%%%%%%%%%%%%%%%%%%%%%%%%5
   \begin{minipage}{0.19\textwidth}
    \centering
\begin{tikzpicture}[scale=0.5, transform shape]

    \node [draw, circle, fill=white] (a1) at (0,0) {};
    \node [draw, circle, fill=white] (a2) at (0,-1.5) {};
    \node [draw, circle, fill=white] (a3) at (0,-3) {};
    \node [draw, circle, fill=white] (a4) at (1.5,0) {a4};
    \node [draw, circle, fill=white] (a5) at (1.5,-1.5) {};
    \node [draw, circle, fill=white] (a6) at (1.5,-3) {};
    \node [draw, circle, fill=white] (vi) at (3.5,0) {};
    \node [draw, circle, fill=white] (vi1) at (5,0) {};
    \draw[dotted, thick] (vi1) -- (6,0);
    \draw (a1) -- (a2);  
    \draw[line width=1pt, gray!60, preaction={draw=gray!30, line width=4pt}] (a1) -- (a4);
    \draw (a2) -- (a3);    
    \draw (a2) -- (a5);          
    \draw (a3) -- (a6);  
    \draw (vi) -- (a2);
    \draw[line width=1pt, gray!60, preaction={draw=gray!30, line width=4pt}] (vi) to[out=-50, in=40] (a6);
    \draw[line width=1pt, gray!60, preaction={draw=gray!30, line width=4pt}] (a4) -- (vi);
    \draw[line width=1pt, gray!60, preaction={draw=gray!30, line width=4pt}] (vi) -- (vi1);

\end{tikzpicture}
\end{minipage}
  %%%%%%%%%%%%%%%%%%%%%%%%%%%%%%%%%%%%%%%%%%%%%%%%%%%%%%%%%%%%%%%%%%%%%%%%%%%%%%%%%%
  \begin{minipage}{0.19\textwidth}
    \centering
\begin{tikzpicture}[scale=0.5, transform shape]

    \node [draw, circle, fill=white] (a1) at (0,0) {};
    \node [draw, circle, fill=white] (a2) at (0,-1.5) {};
    \node [draw, circle, fill=white] (a3) at (0,-3) {};
    \node [draw, circle, fill=white] (a4) at (1.5,0) {a4};
    \node [draw, circle, fill=white] (a5) at (1.5,-1.5) {};
    \node [draw, circle, fill=white] (a6) at (1.5,-3) {};
    \node [draw, circle, fill=white] (vi) at (3.5,0) {};
    \node [draw, circle, fill=white] (vi1) at (5,0) {};
    \draw[dotted, thick] (vi1) -- (6,0);
    \draw (a1) -- (a2);  
    \draw (a1) -- (a4);
    \draw (a2) -- (a3);    
    \draw[line width=1pt, gray!60, preaction={draw=gray!30, line width=4pt}] (a2) -- (a5);          
    \draw (a3) -- (a6);  
   \draw (vi) -- (a4);
    \draw[line width=1pt, gray!60, preaction={draw=gray!30, line width=4pt}] (vi) to[out=-50, in=40] (a6);
    \draw[line width=1pt, gray!60, preaction={draw=gray!30, line width=4pt}] (a5) -- (vi);
    \draw[line width=1pt, gray!60, preaction={draw=gray!30, line width=4pt}] (vi) -- (vi1);

\end{tikzpicture}
\end{minipage}
  %%%%%%%%%%%%%%%%%%%%%%%%%%%%%%%%%%%%%%%%%%%%%%%%%%%%%%%%%%%%%%%%%%%%%%%%%%%555
  \begin{minipage}{0.19\textwidth}
    \centering
\begin{tikzpicture}[scale=0.5, transform shape]

    \node [draw, circle, fill=white] (a1) at (0,0) {};
    \node [draw, circle, fill=white] (a2) at (0,-1.5) {};
    \node [draw, circle, fill=white] (a3) at (0,-3) {};
    \node [draw, circle, fill=white] (a4) at (1.5,0) {a4};
    \node [draw, circle, fill=white] (a5) at (1.5,-1.5) {};
    \node [draw, circle, fill=white] (a6) at (1.5,-3) {};
    \node [draw, circle, fill=white] (vi) at (3.5,0) {};
    \node [draw, circle, fill=white] (vi1) at (5,0) {};
    \draw[dotted, thick] (vi1) -- (6,0);
    \draw (a1) -- (a2);  
    \draw[line width=1pt, gray!60, preaction={draw=gray!30, line width=4pt}] (a1) -- (a4);
    \draw (a2) -- (a3);    
    \draw (a2) -- (a5);          
    \draw (a3) -- (a6);  
    \draw (vi) -- (a5);
    \draw[line width=1pt, gray!60, preaction={draw=gray!30, line width=4pt}] (vi) to[out=-50, in=40] (a3);
    \draw[line width=1pt, gray!60, preaction={draw=gray!30, line width=4pt}] (a4) -- (vi);
    \draw[line width=1pt, gray!60, preaction={draw=gray!30, line width=4pt}] (vi) -- (vi1);
\end{tikzpicture}
\end{minipage}\\

 %%%%%%%%%%%%%%%%%%%%%%%%%%%%%%%%%%%%%%%%%%%%%%%%%%%%%%%%%%%%%%%%%%%%%%%%%%%%%%%%%%%%%
  \vspace{2mm}
  %%%%%%%%%%%%%%%%%%%%%%%%%%%%%%%%%%%%%%%%%%%%%%%%%%%%%%%%%%%%%%%%%%%%%%%%%%%%%%%%%%%%%%
  
 \begin{minipage}{0.19\textwidth}
    \centering
\begin{tikzpicture}[scale=0.5, transform shape]

   \node [draw, circle, fill=white] (a1) at (0,0) {};
            \node [draw, circle, fill=white] (a2) at (0,-1.5) {};
            \node [draw, circle, fill=white] (a3) at (0,-3) {};
            \node [draw, circle, fill=white] (a4) at (1.5,0) {};
            \node [draw, circle, fill=white] (a5) at (1.5,-1.5) {a5};
            \node [draw, circle, fill=white] (a6) at (1.5,-3) {};
            \node [draw, circle, fill=white] (vi) at (3, -1.5) {vi};
            \node [draw, circle, fill=white] (vi1) at (4.5, -1.5) {};
            \draw (a1) -- (a2) -- (a3);  
            \draw (a1) -- (a4);          
            \draw[line width=1pt, gray!60, preaction={draw=gray!30, line width=4pt}] (a2) -- (a5);          
            \draw (a3) -- (a6);  
            \draw[line width=1pt, gray!60, preaction={draw=gray!30, line width=4pt}] (vi) -- (a4); 
            \draw[line width=1pt, gray!60, preaction={draw=gray!30, line width=4pt}] (a5) -- (vi);
            \draw[line width=1pt, gray!60, preaction={draw=gray!30, line width=4pt}] (vi) -- (vi1);
            \draw[dotted, thick] (vi1) -- +(1.2, 0);
            \draw[dashed, gray, thick] (vi) -- (a3); 
\end{tikzpicture}
\end{minipage}
%%%%%%%%%%%%%%%%%%%%%%%%%%%%%%%%%%%%%%%%%%%%%%%%%%%%%%%%%%%%%%%%%%%%%%%%%
   \begin{minipage}{0.19\textwidth}
    \centering
\begin{tikzpicture}[scale=0.5, transform shape]

   \node [draw, circle, fill=white] (a1) at (0,0) {};
            \node [draw, circle, fill=white] (a2) at (0,-1.5) {};
            \node [draw, circle, fill=white] (a3) at (0,-3) {};
            \node [draw, circle, fill=white] (a4) at (1.5,0) {};
            \node [draw, circle, fill=white] (a5) at (1.5,-1.5) {a5};
            \node [draw, circle, fill=white] (a6) at (1.5,-3) {};
            \node [draw, circle, fill=white] (vi) at (3, -1.5) {};
            \node [draw, circle, fill=white] (vi1) at (4.5, -1.5) {};
            \draw (a1) -- (a2) -- (a3);  
            \draw[line width=1pt, gray!60, preaction={draw=gray!30, line width=4pt}] (a1) -- (a4);          
            \draw (a2) -- (a5);          
            \draw (a3) -- (a6);  
            \draw[line width=1pt, gray!60, preaction={draw=gray!30, line width=4pt}] (a1) -- (vi);
            \draw[line width=1pt, gray!60, preaction={draw=gray!30, line width=4pt}] (a5) -- (vi);
            \draw[line width=1pt, gray!60, preaction={draw=gray!30, line width=4pt}] (vi) -- (vi1);
            \draw[dotted, thick] (vi1) -- +(1.2, 0); 
\end{tikzpicture}
\end{minipage}
  %%%%%%%%%%%%%%%%%%%%%%%%%%%%%%%%%%%%%%%%%%%%%%%%%%%%%%%%%%%%%%%%%%%%%%%%%%%%%%%
  \begin{minipage}{0.19\textwidth}
    \centering
\begin{tikzpicture}[scale=0.5, transform shape]
   \node [draw, circle, fill=white] (a1) at (0,0) {};
            \node [draw, circle, fill=white] (a2) at (0,-1.5) {};
            \node [draw, circle, fill=white] (a3) at (0,-3) {};
            \node [draw, circle, fill=white] (a4) at (1.5,0) {};
            \node [draw, circle, fill=white] (a5) at (1.5,-1.5) {a5};
            \node [draw, circle, fill=white] (a6) at (1.5,-3) {};
            \node [draw, circle, fill=white] (vi) at (3, -1.5) {};
            \node [draw, circle, fill=white] (vi1) at (4.5, -1.5) {};
            \draw (a1) -- (a2) -- (a3);  
            \draw (a1) -- (a4);          
            \draw (a2) -- (a5);  
            \draw (vi) -- (a5);       
            \draw[line width=1pt, gray!60, preaction={draw=gray!30, line width=4pt}] (a3) -- (a6);  
            \draw[line width=1pt, gray!60, preaction={draw=gray!30, line width=4pt}] (a4) -- (vi);
            \draw[line width=1pt, gray!60, preaction={draw=gray!30, line width=4pt}] (a6) -- (vi);
            \draw[line width=1pt, gray!60, preaction={draw=gray!30, line width=4pt}] (vi) -- (vi1);
            \draw[dotted, thick] (vi1) -- +(1.2, 0); 
\end{tikzpicture}
\end{minipage}
 %%%%%%%%%%%%%%%%%%%%%%%%%%%%%%%%%%%%%%%%%%%%%%%%%%%%%%%%%%%%%%%%%%%%%%%%
 \begin{minipage}{0.19\textwidth}
    \centering
\begin{tikzpicture}[scale=0.5, transform shape]

   \node [draw, circle, fill=white] (a1) at (0,0) {};
            \node [draw, circle, fill=white] (a2) at (0,-1.5) {};
            \node [draw, circle, fill=white] (a3) at (0,-3) {};
            \node [draw, circle, fill=white] (a4) at (1.5,0) {};
            \node [draw, circle, fill=white] (a5) at (1.5,-1.5) {a5};
            \node [draw, circle, fill=white] (a6) at (1.5,-3) {};
            \node [draw, circle, fill=white] (vi) at (3, -1.5) {};
            \node [draw, circle, fill=white] (vi1) at (4.5, -1.5) {};
            \draw (a1) -- (a2) -- (a3);  
            \draw[line width=1pt, gray!60, preaction={draw=gray!30, line width=4pt}] (a1) -- (a4);          
            \draw (a2) -- (a5);          
            \draw (a3) -- (a6); 
            \draw (vi) -- (a5);  
            \draw[line width=1pt, gray!60, preaction={draw=gray!30, line width=4pt}] (a1) -- (vi);
            \draw[line width=1pt, gray!60, preaction={draw=gray!30, line width=4pt}] (a3) -- (vi);
            \draw[line width=1pt, gray!60, preaction={draw=gray!30, line width=4pt}] (vi) -- (vi1);
            \draw[dotted, thick] (vi1) -- +(1.2, 0); 
\end{tikzpicture}
\end{minipage}
  
\end{tabular}

\caption{Finding $T_1$} 
\label{figfind1}
        \vspace{-0.10em}
\end{figure}
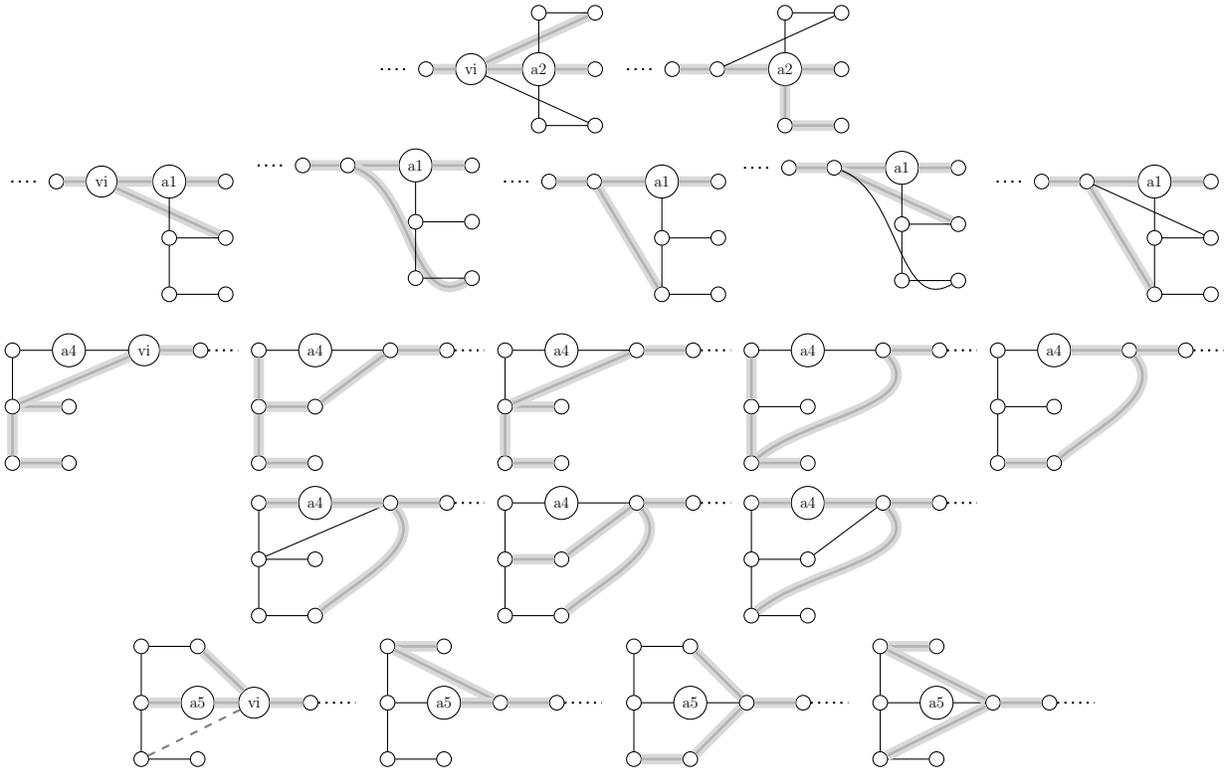

\begin{figure}[H]

    \centering
 \begin{tabular}{ccc} 
%%%%%%%%%%%%%%%%%%%%%%%%%%%%%%%%%%%%%%%%%%%%%%%%%%%%%%%%%%%%%%%%%%%%%%%%%%%%%%%%%%%%%%%%%%%%%%%%%

\begin{minipage}{0.19\textwidth}
\begin{tikzpicture}[scale=0.5, transform shape]
% رأس‌های سطح vi
\node[draw, circle, fill=white] (vi) at (0,2) {vi};
\node[draw, circle, fill=white] (vi1) at (0,3.5) {};
\draw (vi1) -- +(0,1.2); 
\draw[dotted] (vi1) -- +(0,1.2);
\draw[line width=1pt, gray!60, preaction={draw=gray!30, line width=4pt}] (vi1) -- (vi);
\node[draw, circle, fill=white] (b1) at (0,0) {b1};
\draw[line width=1pt, gray!60, preaction={draw=gray!80, line width=4pt}] (vi) -- (b1);
\node[draw, circle, fill=white] (b2) at (-3,-2) {};
\node[draw, circle, fill=white] (b3) at (0,-2) {};
\node[draw, circle, fill=white] (b4) at (3,-2) {};
\node[draw, circle, fill=white] (b5) at (-4,-4) {};
\node[draw, circle, fill=white] (b6) at (-3,-4) {};
\node[draw, circle, fill=white] (b7) at (-2,-4) {};
\node[draw, circle, fill=white] (b8) at (-1,-4) {};
\node[draw, circle, fill=white] (b9) at (0,-4) {};
\node[draw, circle, fill=white] (b10) at (1,-4) {};
\node[draw, circle, fill=white] (b11) at (2,-4) {};
\node[draw, circle, fill=white] (b12) at (3,-4) {};
\node[draw, circle, fill=white] (b13) at (4,-4) {};
\draw (b1) -- (b2);
\draw[line width=1pt, gray!60, preaction={draw=gray!30, line width=4pt}] (b1) -- (b3);
\draw[line width=1pt, gray!60, preaction={draw=gray!30, line width=4pt}] (b1) -- (b4);
\draw (b2) -- (b5);
\draw (b2) -- (b6);
\draw (b2) -- (b7);
\draw (b3) -- (b8);
\draw[line width=1pt, gray!60, preaction={draw=gray!30, line width=4pt}] (b3) -- (b9);
\draw (b3) -- (b10);
\draw[line width=1pt, gray!60, preaction={draw=gray!30, line width=4pt}] (b4) -- (b11);
\draw (b4) -- (b12);
\draw (b4) -- (b13);
\draw[dashed, gray, thick] (vi) to[out=20, in=90] (b13);
\draw[dashed, gray, thick] (vi) to[out=10, in=40] (b8);
\draw[dashed, gray, thick] (vi) to[out=155, in=90] (b5);

\end{tikzpicture}
\end{minipage}

%%%%%%%%%%%%%%%%%%%%%%%%%%%%%%%%%%%%%%%%%%%%%%%%%%%%%%%%%%%%%%%%%%%%%%%%%%%%%%%%%%%%%%%%%%\begin{minipage}{0.19\textwidth}
 \hspace{2cm} 
  
\begin{minipage}{0.19\textwidth}
  \centering
\begin{tikzpicture}[scale=0.5, transform shape]
\node[draw, circle, fill=white] (vi1) at (0,3.5) {};
\node[draw, circle, fill=white] (vi) at (0,2) {vi};
\node[draw, circle, fill=white] (b1) at (0,0) {};
\node[draw, circle, fill=white] (b2) at (-3,-2) {b2};
\node[draw, circle, fill=white] (b3) at (0,-2) {};
\node[draw, circle, fill=white] (b4) at (3,-2) {};
\node[draw, circle, fill=white] (b5) at (-4,-4) {};
\node[draw, circle, fill=white] (b6) at (-3,-4) {};
\node[draw, circle, fill=white] (b7) at (-2,-4) {};
\node[draw, circle, fill=white] (b8) at (-1,-4) {};
\node[draw, circle, fill=white] (b9) at (0,-4) {};
\node[draw, circle, fill=white] (b10) at (1,-4) {};
\node[draw, circle, fill=white] (b11) at (2,-4) {};
\node[draw, circle, fill=white] (b12) at (3,-4) {};
\node[draw, circle, fill=white] (b13) at (4,-4) {};
\draw[line width=1pt, gray!60, preaction={draw=gray!30, line width=4pt}] (vi1) -- (vi);
\draw (vi1) -- +(0,1.2); 
\draw[dotted] (vi1) -- +(0,1.2);
\draw[line width=1pt, gray!60, preaction={draw=gray!80, line width=4pt}] (vi) -- (b2);
\draw[line width=1pt, gray!60, preaction={draw=gray!30, line width=4pt}] (vi) -- (b4);
\draw (b1) -- (b2);
\draw (b1) -- (b3);
\draw (b1) -- (b4);
\draw[dashed, gray, thick] (vi) to[out=10, in=40] (b3);
\draw[line width=1pt, gray!60, preaction={draw=gray!30, line width=4pt}] (b2) -- (b5);
\draw (b2) -- (b6);
\draw (b2) -- (b7);
\draw (b3) -- (b8);
\draw (b3) -- (b9);
\draw (b3) -- (b10);
\draw[line width=1pt, gray!60, preaction={draw=gray!30, line width=4pt}] (b4) -- (b11);
\draw (b4) -- (b12);
\draw (b4) -- (b13);

\end{tikzpicture}
\end{minipage}
%%%%%%%%%%%%%%%%%%%%%%%%%%%%%%%%%%%%%%%%%%%%%%%%%%%%%%%%%%%%%%%%%%%%%%%%%%%%%%%%%%%%
\hspace{2cm} 

\begin{minipage}{0.19\textwidth}
  \centering
\begin{tikzpicture}[scale=0.5, transform shape]
\node[draw, circle, fill=white] (vi1) at (0,3.5) {};
\node[draw, circle, fill=white] (vi) at (0,2) {vi};
\node[draw, circle, fill=white] (b1) at (0,0) {};
\node[draw, circle, fill=white] (b2) at (-3,-2) {b2};
\node[draw, circle, fill=white] (b3) at (0,-2) {};
\node[draw, circle, fill=white] (b4) at (3,-2) {};
\node[draw, circle, fill=white] (b5) at (-4,-4) {};
\node[draw, circle, fill=white] (b6) at (-3,-4) {};
\node[draw, circle, fill=white] (b7) at (-2,-4) {};
\node[draw, circle, fill=white] (b8) at (-1,-4) {};
\node[draw, circle, fill=white] (b9) at (0,-4) {};
\node[draw, circle, fill=white] (b10) at (1,-4) {};
\node[draw, circle, fill=white] (b11) at (2,-4) {};
\node[draw, circle, fill=white] (b12) at (3,-4) {};
\node[draw, circle, fill=white] (b13) at (4,-4) {};
\draw[line width=1pt, gray!60, preaction={draw=gray!30, line width=4pt}] (vi1) -- (vi);
\draw (vi1) -- +(0,1.2); 
\draw[dotted] (vi1) -- +(0,1.2);
\draw[line width=1pt, gray!60, preaction={draw=gray!80, line width=4pt}] (vi) -- (b2);
\draw[line width=1pt, gray!60, preaction={draw=gray!30, line width=4pt}] (vi) -- (b4);
\draw (b1) -- (b2);
\draw (b1) -- (b3);
\draw (b1) -- (b4);
\draw[dashed, gray, thick] (vi) to[out=12, in=40] (b8);
\draw[dashed, gray, thick] (vi) to[out=15, in=40] (b9);
\draw[dashed, gray, thick] (vi) to[out=17, in=40] (b10);
\draw[line width=1pt, gray!60, preaction={draw=gray!30, line width=4pt}] (b2) -- (b5);
\draw (b2) -- (b6);
\draw (b2) -- (b7);
\draw (b3) -- (b8);
\draw (b3) -- (b9);
\draw (b3) -- (b10);
\draw[line width=1pt, gray!60, preaction={draw=gray!30, line width=4pt}] (b4) -- (b11);
\draw (b4) -- (b12);
\draw (b4) -- (b13);

\end{tikzpicture}
\end{minipage}\\

%%%%%%%%%%%%%%%%%%%%%%%%%%%%%%%%%%%%%%%%%%%%%%%%%%%%%%%%%%%%%%%%%%%%%%%%%%%%%%%%%%

\begin{minipage}{0.19\textwidth}
\vspace{1cm}
  \centering
\begin{tikzpicture}[scale=0.5, transform shape]
\node[draw, circle, fill=white] (vi1) at (0,3.5) {};
\node[draw, circle, fill=white] (vi) at (0,2) {vi};
\node[draw, circle, fill=white] (b1) at (0,0) {};
\node[draw, circle, fill=white] (b2) at (-3,-2) {b2};
\node[draw, circle, fill=white] (b3) at (0,-2) {};
\node[draw, circle, fill=white] (b4) at (3,-2) {};
\node[draw, circle, fill=white] (b5) at (-4,-4) {};
\node[draw, circle, fill=white] (b6) at (-3,-4) {};
\node[draw, circle, fill=white] (b7) at (-2,-4) {};
\node[draw, circle, fill=white] (b8) at (-1,-4) {};
\node[draw, circle, fill=white] (b9) at (0,-4) {};
\node[draw, circle, fill=white] (b10) at (1,-4) {};
\node[draw, circle, fill=white] (b11) at (2,-4) {};
\node[draw, circle, fill=white] (b12) at (3,-4) {};
\node[draw, circle, fill=white] (b13) at (4,-4) {};
\draw[line width=1pt, gray!60, preaction={draw=gray!30, line width=4pt}] (vi1) -- (vi);
\draw (vi1) -- +(0,1.2); 
\draw[dotted] (vi1) -- +(0,1.2);
\draw[line width=1pt, gray!60, preaction={draw=gray!80, line width=4pt}] (vi) -- (b2);
\draw[line width=1pt, gray!60, preaction={draw=gray!30, line width=4pt}] (vi) -- (b8);
\draw (b1) -- (b2);
\draw (b1) -- (b3);
\draw (b1) -- (b4);
\draw[dashed, gray, thick] (vi) to[out=12, in=24] (b11);
\draw[dashed, gray, thick] (vi) to[out=15, in=40] (b12);
\draw[dashed, gray, thick] (vi) to[out=17, in=40] (b13);
\draw[dashed, gray, thick] (vi) to[out=17, in=40] (b10);
\draw[dashed, gray, thick] (vi) to[out=17, in=40] (b9);  
\draw[line width=1pt, gray!60, preaction={draw=gray!30, line width=4pt}] (b2) -- (b5);
\draw (b2) -- (b6);
\draw (b2) -- (b7);
\draw[line width=1pt, gray!60, preaction={draw=gray!30, line width=4pt}] (b3) -- (b8);
\draw (b3) -- (b9);
\draw (b3) -- (b10);
\draw (b4) -- (b11);
\draw (b4) -- (b12);
\draw (b4) -- (b13);

\end{tikzpicture}
\end{minipage}

%%%%%%%%%%%%%%%%%%%%%%%%%%%%%%%%%%%%%%%%%%%%%%%%%%%%%%%%%%%%%%%%%%%%%%%%%%%%%%%%%%%%%%%%
\hspace{2cm}
\begin{minipage}{0.19\textwidth}
\vspace{1cm}
  \centering
\begin{tikzpicture}[scale=0.5, transform shape]
\node[draw, circle, fill=white] (vi1) at (0,3.5) {};
\node[draw, circle, fill=white] (vi) at (0,2) {vi};
\node[draw, circle, fill=white] (b1) at (0,0) {};
\node[draw, circle, fill=white] (b2) at (-3,-2) {};
\node[draw, circle, fill=white] (b3) at (0,-2) {};
\node[draw, circle, fill=white] (b4) at (3,-2) {};
\node[draw, circle, fill=white] (b5) at (-4,-4) {b5};
\node[draw, circle, fill=white] (b6) at (-3,-4) {};
\node[draw, circle, fill=white] (b7) at (-2,-4) {};
\node[draw, circle, fill=white] (b8) at (-1,-4) {};
\node[draw, circle, fill=white] (b9) at (0,-4) {};
\node[draw, circle, fill=white] (b10) at (1,-4) {};
\node[draw, circle, fill=white] (b11) at (2,-4) {};
\node[draw, circle, fill=white] (b12) at (3,-4) {};
\node[draw, circle, fill=white] (b13) at (4,-4) {};
\draw[line width=1pt, gray!60, preaction={draw=gray!30, line width=4pt}] (vi1) -- (vi);
\draw (vi1) -- +(0,1.2);
\draw[dotted] (vi1) -- +(0,1.2);
\draw[line width=1pt, gray!60, preaction={draw=gray!80, line width=4pt}] (vi) to[out=180, in=90] (b5);
\draw[line width=1pt, gray!60, preaction={draw=gray!30, line width=4pt}] (vi) -- (b4);
\draw (b1) -- (b2);
\draw (b1) -- (b3);
\draw (b1) -- (b4);
\draw[dashed, gray, thick] (vi) to[out=10, in=30] (b3);
\draw[dashed, gray, thick] (vi) to[out=-10, in=50] (b6);
\draw[dashed, gray, thick] (vi) to[out=-5, in=50] (b7);
\draw[line width=1pt, gray!60, preaction={draw=gray!30, line width=4pt}] (b2) -- (b5);
\draw (b2) -- (b6);
\draw (b2) -- (b7);
\draw (b3) -- (b8);
\draw (b3) -- (b9);
\draw (b3) -- (b10);
\draw[line width=1pt, gray!60, preaction={draw=gray!30, line width=4pt}] (b4) -- (b11);
\draw (b4) -- (b12);
\draw (b4) -- (b13);
\end{tikzpicture}
\end{minipage}
%%%%%%%%%%%%%%%%%%%%%%%%%%%%%%%%%%%%%%%%%%%%%%%%%%%%%%%%%%%%%%%%%%%%%%%%%%%%%%%
\hspace{2cm}
\begin{minipage}{0.19\textwidth}
\vspace{1cm}
  \centering
\begin{tikzpicture}[scale=0.5, transform shape]
\node[draw, circle, fill=white] (vi1) at (0,3.5) {};
\node[draw, circle, fill=white] (vi) at (0,2) {vi};
\node[draw, circle, fill=white] (b1) at (0,0) {};
\node[draw, circle, fill=white] (b2) at (-3,-2) {};
\node[draw, circle, fill=white] (b3) at (0,-2) {};
\node[draw, circle, fill=white] (b4) at (3,-2) {};
\node[draw, circle, fill=white] (b5) at (-4,-4) {b5};
\node[draw, circle, fill=white] (b6) at (-3,-4) {};
\node[draw, circle, fill=white] (b7) at (-2,-4) {};
\node[draw, circle, fill=white] (b8) at (-1,-4) {};
\node[draw, circle, fill=white] (b9) at (0,-4) {};
\node[draw, circle, fill=white] (b10) at (1,-4) {};
\node[draw, circle, fill=white] (b11) at (2,-4) {};
\node[draw, circle, fill=white] (b12) at (3,-4) {};
\node[draw, circle, fill=white] (b13) at (4,-4) {};
\draw[line width=1pt, gray!60, preaction={draw=gray!30, line width=4pt}] (vi1) -- (vi);
\draw (vi1) -- +(0,1.2); % خط نقطه‌چین به بالا
\draw[dotted] (vi1) -- +(0,1.2);
\draw[line width=1pt, gray!60, preaction={draw=gray!80, line width=4pt}] (vi) to[out=180, in=90] (b5);
\draw[line width=1pt, gray!60, preaction={draw=gray!30, line width=4pt}] (vi) -- (b4);
\draw (b1) -- (b2);
\draw (b1) -- (b3);
\draw (b1) -- (b4);
\draw[dashed, gray, thick] (vi) to[out=10, in=30] (b8);
\draw[dashed, gray, thick] (vi) to[out=0, in=50] (b6);
\draw[dashed, gray, thick] (vi) to[out=0, in=50] (b7);
\draw[dashed, gray, thick] (vi) to[out=10, in=30] (b9);
\draw[dashed, gray, thick] (vi) to[out=10, in=30] (b10);
\draw[line width=1pt, gray!60, preaction={draw=gray!30, line width=4pt}] (b2) -- (b5);
\draw (b2) -- (b6);
\draw (b2) -- (b7);
\draw (b3) -- (b8);
\draw (b3) -- (b9);
\draw (b3) -- (b10);
\draw[line width=1pt, gray!60, preaction={draw=gray!30, line width=4pt}] (b4) -- (b11);
\draw (b4) -- (b12);
\draw (b4) -- (b13);

\end{tikzpicture}
\end{minipage}\\
%%%%%%%%%%%%%%%%%%%%%%%%%%%%%%%%%%%%%%%%%%%%%%%%%%%%%%%%%%%%%%%%%55

\begin{minipage}{0.19\textwidth}
\vspace{1cm}
  \centering
\begin{tikzpicture}[scale=0.5, transform shape]
\node[draw, circle, fill=white] (vi1) at (0,3.5) {};
\node[draw, circle, fill=white] (vi) at (0,2) {vi};
\node[draw, circle, fill=white] (b1) at (0,0) {};
\node[draw, circle, fill=white] (b2) at (-3,-2) {};
\node[draw, circle, fill=white] (b3) at (0,-2) {};
\node[draw, circle, fill=white] (b4) at (3,-2) {};
\node[draw, circle, fill=white] (b5) at (-4,-4) {b5};
\node[draw, circle, fill=white] (b6) at (-3,-4) {};
\node[draw, circle, fill=white] (b7) at (-2,-4) {};
\node[draw, circle, fill=white] (b8) at (-1,-4) {};
\node[draw, circle, fill=white] (b9) at (0,-4) {};
\node[draw, circle, fill=white] (b10) at (1,-4) {};
\node[draw, circle, fill=white] (b11) at (2,-4) {};
\node[draw, circle, fill=white] (b12) at (3,-4) {};
\node[draw, circle, fill=white] (b13) at (4,-4) {};
\draw[line width=1pt, gray!60, preaction={draw=gray!30, line width=4pt}] (vi1) -- (vi);
\draw (vi1) -- +(0,1.2); 
\draw[dotted] (vi1) -- +(0,1.2);
\draw[line width=1pt, gray!60, preaction={draw=gray!80, line width=4pt}] (vi) to[out=180, in=90] (b5);
\draw (b1) -- (b2);
\draw (b1) -- (b3);
\draw (b1) -- (b4);
\draw[dashed, gray, thick] (vi) to[out=0, in=50] (b6);
\draw[dashed, gray, thick] (vi) to[out=0, in=50] (b7);
\draw[dashed, gray, thick] (vi) to[out=0, in=50] (b8);
\draw[dashed, gray, thick] (vi) to[out=0, in=50] (b9);
\draw[dashed, gray, thick] (vi) to[out=0, in=50] (b10);
\draw[dashed, gray, thick] (vi) to[out=0, in=50] (b11);
\draw[dashed, gray, thick] (vi) to[out=0, in=50] (b12);
\draw[line width=1pt, gray!60, preaction={draw=gray!30, line width=4pt}] (vi) to[out=10, in=50] (b13);
\draw[line width=1pt, gray!60, preaction={draw=gray!30, line width=4pt}] (b2) -- (b5);
\draw (b2) -- (b6);
\draw (b2) -- (b7);
\draw (b3) -- (b8);
\draw (b3) -- (b9);
\draw (b3) -- (b10);
\draw (b4) -- (b11);
\draw (b4) -- (b12);
\draw[line width=1pt, gray!60, preaction={draw=gray!30, line width=4pt}] (b4) -- (b13);

\end{tikzpicture}
\end{minipage}
\end{tabular}

\caption{Finding $T_2$} \label{figfind2}
        \vspace{0.0em}
\end{figure}
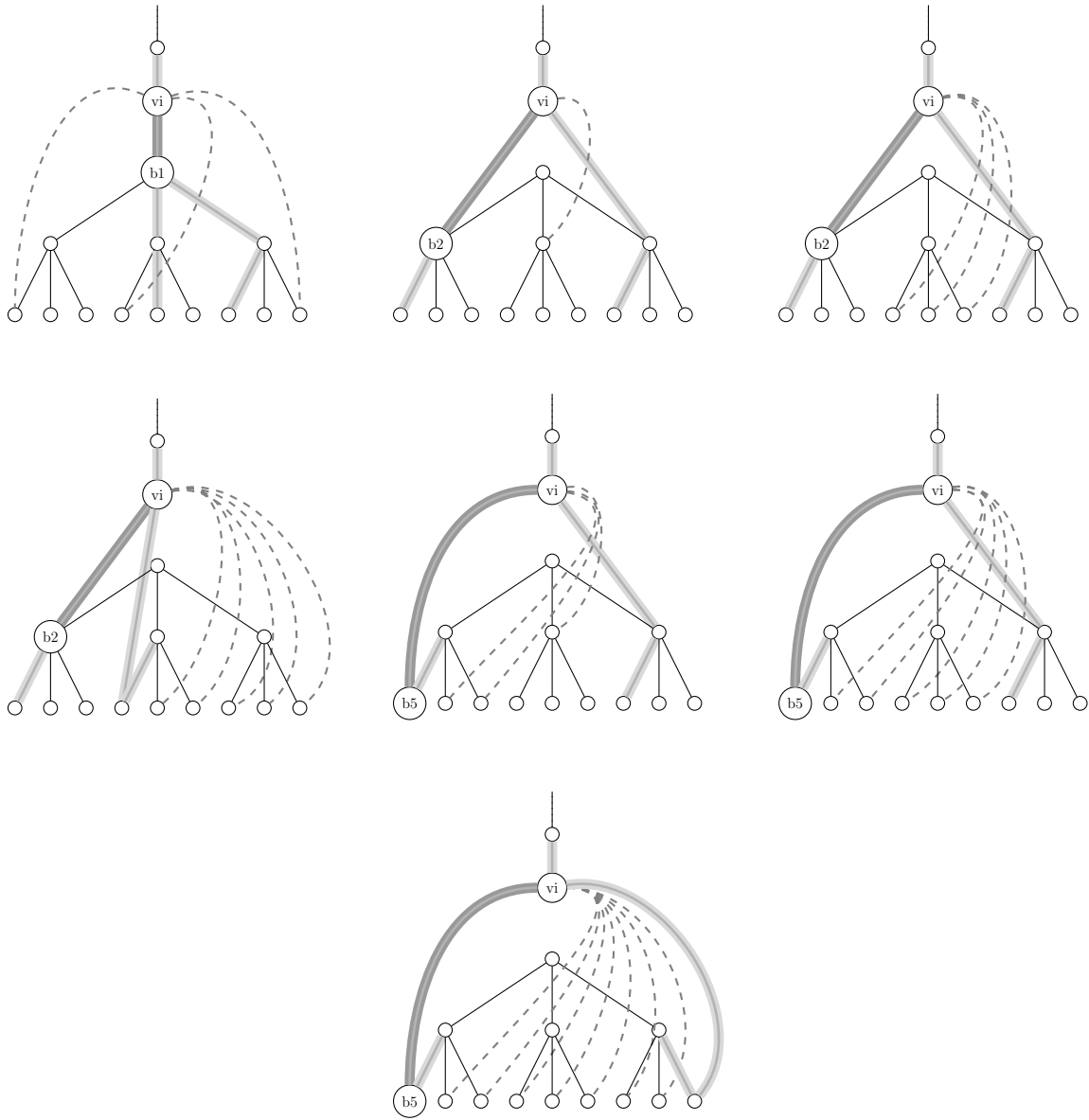

\end{document}